\DeclareMathOperator{\Forall}{\;\forall}
\DeclareMathOperator{\st}{\,|\,}
\DeclareMathOperator{\im}{\text{im}}
\DeclareMathOperator{\C}{\mathbb{C}}
\DeclareMathOperator{\R}{\mathbb{R}}
\DeclareMathOperator{\Z}{\mathbb{Z}}
\DeclareMathOperator{\Q}{\mathbb{Q}}
\DeclareMathOperator{\N}{\mathbb{N}}
\DeclareMathOperator{\Prin}{Prin}
\DeclareMathOperator{\Cart}{Cart}
\DeclareMathOperator{\Pic}{Pic}
\DeclareMathOperator{\Div}{Div}
\DeclareMathOperator{\link}{link}
\DeclareMathOperator{\Star}{star}
\DeclareMathOperator{\diag}{diag}
\DeclareMathOperator{\Diag}{Diag}
\DeclareMathOperator{\QCart}{\mathbb{Q}Cart}
\newtheorem{theorem}{Theorem}
\numberwithin{theorem}{section}
\newtheorem{proposition}[theorem]{Proposition}
\newtheorem{conjecture}[theorem]{Conjecture}
\theoremstyle{definition}
\newtheorem{definition}[theorem]{Definition}
\newtheorem{example}[theorem]{Example}
\newtheorem{remark}[theorem]{Remark}
\title{A Chip-Firing Game on the Product of Two Graphs and the Tropical Picard Group}
\author{Alexander Lazar}
\begin{document}

\maketitle

\section{Introduction}
The chip-firing game is a well-studied subject in combinatorics with deep connections to other areas of mathematics, including algebraic geometry. In \cite{Graph_Riemann_Roch}, Baker and Norine demonstrated an analogy between the chip-firing game and linear equivalence of divisors on a Riemann surface. In the same paper, they proved a graph-theoretic analogue of the Riemann-Roch theorem from algebraic geometry. In fact, beyond these analogies, there is a fundamental connection between graph theory and algebraic geometry.

For example, let $X_t$ be a family of smooth curves over $\C$, parametrized by $t$, that becomes a singular curve when $t=0$ (this is an instance of a ``degeneration'' of curves). We can associate a graph to $X_0$, and divisors on $X_0$ descend to chip-firing states on that graph, with chip-firing moves on the graph corresponding to linear equivalence of divisors on $X_0$. Thus, one can address questions in algebraic geometry by studying the chip-firing game.

Furthermore, one can generalize this sort of reasoning to higher-dimensional objects. In his preprint \cite{Cartwright_Paper}, Cartwright introduced the notion of a weak tropical complex in order to generalize the concepts of divisors and the Picard group on graphs from \cite{Graph_Riemann_Roch}. A weak tropical complex $\Gamma$ is a $\Delta$-complex equipped with algebraic data that allows $\Gamma$ to be viewed as the dual complex of a particular kind of degeneration over a discrete valuation ring. 

Within the context of weak tropical complexes, the analogue of the chip-firing game is the theory of divisors. A divisor on a weak tropical complex is a formal linear combination of codimension-$1$ polyhedral subsets (which we can think of as a higher-dimensional ``chip configuration''), and two divisors are linearly equivalent if they differ by a ``tropical principal divisor'' (which we can think of as encoding a chip-firing move). One important invariant in this theory is the tropical Picard group, which consists of a certain set of tropical divisors up to linear equivalence.

Every finite graph has a unique tropical complex structure. In this tropical complex structure, divisors correspond to states in a certain form of the chip-firing game on that graph. If $G$ and $H$ are graphs, and $\Gamma$ is a triangulation of their product obtained by adding in a diagonal of each resulting square, then $\Gamma$ has a weak tropical complex structure that is compatible with the tropical complex structures on $G$ and $H$. Motivated by analogous results in algebraic geometry (e.g., \cite[Theorem 1.7]{Ischebeck}), Cartwright conjectured that the Picard groups of $\Gamma$, $G$, and $H$ were closely related.

\begin{theorem}[Main Theorem]\label{MainTheorem}Let $\Pic(\Gamma)$ be the tropical Picard group of $\Gamma$, and $\Pic(G)$ and $\Pic(H)$ be the tropical Picard groups of $G$ and $H$. Then, there is a map $\gamma: \Pic(G) \times \Pic(H) \to \Pic(\Gamma)$ that is always injective and is surjective if at least one of $G$ or $H$ is a tree.\end{theorem}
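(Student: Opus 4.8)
The plan is to construct the map $\gamma$ explicitly and then analyze injectivity and surjectivity separately. Since $\Gamma$ is a triangulation of $G \times H$, there are natural projection maps $\pi_G: \Gamma \to G$ and $\pi_H: \Gamma \to H$. A principal divisor on $G$ arises from a PL-function $f$ on $G$ (linear with integer slopes on each edge); composing with $\pi_G$ gives a function $f \circ \pi_G$ on $\Gamma$. The first thing I would verify is that pullback of a PL-function along $\pi_G$ yields a well-defined divisor on $\Gamma$, and that pullback sends principal divisors to principal divisors, so that the construction descends to a map on Picard groups. Defining $\gamma([D_G],[D_H]) = [\pi_G^*D_G + \pi_H^*D_H]$, I would check it is a well-defined group homomorphism.

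\textbf{Injectivity.} To show $\gamma$ is injective, I would suppose $\pi_G^*D_G + \pi_H^*D_H$ is principal on $\Gamma$, say equal to the divisor of some PL-function $\Phi$ on $\Gamma$, and deduce that $D_G$ and $D_H$ are each principal. The natural strategy is to exploit the product structure: restrict $\Phi$ to the ``fibers'' $\pi_H^{-1}(v)$ for vertices $v$ of $H$ (each a copy of $G$, up to the diagonal subdivisions) and to the fibers $\pi_G^{-1}(w)$. By comparing the slopes of $\Phi$ along edges in the $G$-direction and the $H$-direction, I expect to be able to show that $\Phi$ must decompose (at least up to the ambiguity allowed by linear equivalence) as a sum of a function pulled back from $G$ and one pulled back from $H$. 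The key combinatorial input is that the divisor condition on $\Gamma$ is local at each ridge, and that the ridges of $\Gamma$ in the $G$-direction see only the $G$-component $D_G$ (plus contributions from the diagonals), so matching coefficients ridge-by-ridge should force $D_G$ and $D_H$ to be principal individually.

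\textbf{Surjectivity when one factor is a tree.} Assuming without loss of generality that $H$ is a tree, I would take an arbitrary Cartier divisor $D$ on $\Gamma$ and try to write it, up to linear equivalence, in the image of $\gamma$. The idea is to ``firefight'' or normalize $D$ by subtracting principal divisors on $\Gamma$ until it becomes a pullback. Because $H$ is a tree, every divisor on $H$ is principal (the tropical Picard group of a tree is trivial), and more importantly the tree structure means there are no nontrivial cycles in the $H$-direction to obstruct solving for a potential function. Concretely, I would process the vertices of the tree $H$ in an order extending from the leaves inward, at each step using the acyclicity to solve a system for a PL-function on $\Gamma$ that cancels the part of $D$ living on ridges in the $H$-direction, thereby reducing $D$ (up to linear equivalence) to a divisor supported entirely in the $G$-direction, which is then visibly $\pi_G^*D_G$ for some $D_G$ on $G$.

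The main obstacle I anticipate is the interaction of the two factors through the diagonal edges introduced by the triangulation $\Gamma$. These diagonals do not respect the product structure, so the pullback divisors $\pi_G^*D_G$ and $\pi_H^*D_H$ and the divisor of a PL-function $\Phi$ all have contributions on the diagonal ridges that must be carefully tracked; the bookkeeping of slopes across the diagonals is where both the injectivity argument (showing $\Phi$ decomposes) and the surjectivity argument (solving for the potential) will require the most care. I expect the acyclicity of the tree to be precisely what makes the diagonal contributions solvable in the surjective case, while in general (neither factor a tree) cycles in both directions could create obstructions that prevent surjectivity.
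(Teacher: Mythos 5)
Your construction of the map is essentially the paper's: the pullback $\pi_G^*D_G+\pi_H^*D_H$ is exactly the paper's map $\beta(D_G,D_H)$, which places $D_G(v)$ on each vertical edge $\{v\}\times f$, places $D_H(w)$ on each horizontal edge $e\times\{w\}$, and places $0$ on every diagonal edge; checking that pullback preserves principality is the paper's first proposition. Your injectivity strategy is genuinely different from the paper's and is viable if actually carried out: since $\beta(D_G,D_H)$ vanishes on all diagonal edges, an equality $\beta(D_G,D_H)=\Div(\Phi)$ forces, on each square, the sum of $\Phi$ over the two endpoints of the diagonal to equal its sum over the other two corners; connectivity of $G$ and $H$ then forces $\Phi=g\circ\pi_G+h\circ\pi_H$ exactly, whence $D_G=\Div(g)$ and $D_H=\Div(h)$ because $\beta$ is injective on divisors. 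The paper instead proves injectivity by a rank count on the group of principal divisors vanishing on diagonals (using spanning trees of $G$ and $H$), a torsion-freeness argument, and the Snake Lemma; your route, once the decomposition step is proved rather than ``expected,'' is arguably shorter. But as written it is only a sketch, and the one sentence that does the work is the one you defer.

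The surjectivity half has genuine gaps. First, your claim that every divisor on a tree is principal is false: $\Pic(H)\cong\Z$ when $H$ is a tree (only degree-zero divisors are principal), and this error propagates into your stated endpoint. You cannot reduce an arbitrary Cartier divisor on $\Delta$, up to linear equivalence, to one ``supported entirely in the $G$-direction'': if $D=\beta(0,D_H)$ with $\deg D_H\neq 0$, then $D\sim\pi_G^*D_G'$ would force $[D_H]=0$ in $\Pic(H)$ by injectivity of $\gamma$, a contradiction. The correct normalization, which is what the paper does, is weaker: make $D$ vanish on the \emph{diagonal} edges only, and then use the balancing conditions of Theorem \ref{AltCart} (via Proposition \ref{eCart}) to conclude that what remains equals $\beta(D_G,D_H)$ --- the constancy along the fibers $G\times\{b\}$ and $\{a\}\times H$ is forced by Cartierness, not visible from the support alone. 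Second, ``solve a system using acyclicity'' misidentifies the mechanism and omits the key integrality input: in the paper's base case $H=K_2$, the diagonal coefficients of a Cartier divisor are shown --- by summing balancing conditions around cycles of $G$, not of $H$ --- to satisfy all linear relations on the rows of the oriented incidence matrix of $G$, hence to lie in its real row space, and total unimodularity of that matrix is then required to cancel them by an \emph{integral} principal divisor. Without such an argument your system is only solvable over $\Q$, which would give surjectivity onto $\Q$-Cartier classes rather than onto $\Pic(\Delta)$. The tree hypothesis on $H$ enters only afterwards, in a leaf-peeling induction that applies the $K_2$ case one edge of $H$ at a time while preserving the diagonals already cleared.
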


As we shall see, the proof of this theorem is independent of the choices made in constructing $\Gamma$, although the cokernel of the map $\gamma$ may vary as the triangulation changes. 

In this paper, we prove a seemingly weaker form of the conjecture where we restrict our attention to ``ridge divisors'' --- divisors that are formal linear combinations of ridges of $\Gamma$. Due to computations in sheaf cohomology (see \cite[Section 3]{Cartwright_Surfaces}), the ridge divisor form of the conjecture implies the more general form. Throughout the rest of the paper we will only consider ridge divisors, so we will omit the word ``ridge'' and simply use the term ``divisor''.

The structure of this paper is as follows: in Section 2, we provide preliminary definitions and notation; in Section 3 we prove some technical results about divisors on the product of graphs; and in Section 4 we prove the main theorem of the paper. Finally, we make the following conjecture:

\begin{conjecture}\label{MyConjecture}$\Pic(\Gamma) \cong \Pic(G) \times \Pic(H) \times \Z^{g(G)g(H)}$. \end{conjecture}

\textbf{Acknowledgements:} This work was completed as part of the requirements for earning a Master's degree from the University of Kansas. The conjecture resolved in this paper was presented at the American Institute of Mathematics workshop ``Generalizations of chip-firing and the critical group'' (July 8th-12th, 2013): http://aimath.org/pastworkshops/chipfiring.html.

I would like to thank my M.A. thesis committee: Marge Bayer, Yunfeng Jiang, and Jeremy Martin. I would especially like to thank my M.A. advisor, Jeremy Martin, for his invaluable help in writing the thesis that eventually became this paper. I would also like to thank James McKeown and Alexander Schaefer for their helpful comments.

\section{Preliminaries}

If $\Gamma$ is a pure $n$-dimensional $\Delta$-complex (in the sense of \cite{Hatcher_topology}), we write $\Gamma_k$ for the $k$-skeleton of $\Gamma$. Faces of $\Gamma$ of dimension $n$ are called \textbf{facets}, and faces of dimension $n-1$ are called \textbf{ridges}. We write $V(\Gamma)$ and $E(\Gamma)$ for the sets of vertices and edges of $\Gamma$, respectively.

The following definition is due to Cartwright \cite[Definition 2.1]{Cartwright_Paper}.

\begin{definition}\label{tropcompdef} An $n$-dimensional \textbf{weak tropical complex} is a pure connected $n$-dimensional $\Delta$-complex $\Gamma$, along with a function $\alpha: \Gamma_{n-1} \times V(\Gamma) \to \Z$ such that for every ridge $r$, $$\sum_{v\in r}\alpha(r,v) = \text{deg}(r),$$ where $\text{deg}(r)$ is the number of $n$-faces of $\Gamma$ containing $r$, and where $\alpha(r,v) = 0$ if $v\notin r$.

\end{definition}

\subsection{Divisors on Tropical Complexes}\label{DivisorSubsection}\leavevmode

For the rest of this section, let $(\Gamma,\alpha)$ be a weak tropical complex. A \textbf{divisor} on $\Gamma$ is a formal $\Z$-linear combination of the ridges of $\Gamma$. If $C$ is a divisor on $\Gamma$ and $r$ is a ridge, we write $C(r)$ for the coefficient of $r$ in $C$.

A \textbf{piecewise linear (PL)} function on $\Gamma$ is a continuous piecewise linear function $\phi$ that restricts to a linear function with integer slope on each simplex in $\Gamma$. We can associate to each PL function $\phi$ a divisor $\Div(\phi)$ as follows: 

\begin{equation}\label{divisorequation}\Div(\phi) = \sum_{\text{ridges } r}\left(\sum_{\text{facets } f \supseteq r}\phi(f\setminus r) - \sum_{v \in r}\alpha(r,v)\phi(v)\right)[r].\end{equation}

\begin{definition}\label{tropprindef}A \textbf{tropical principal divisor} is a divisor that can be written as $\Div(\phi)$ for some PL function $\phi$. We define $\Prin(\Gamma)$ to be the group of principal divisors on $\Gamma$.\end{definition}

For any vertex $v \in \Gamma$, we define $\phi_v$ to be the unique PL function that is $1$ on $v$ and $0$ on all other vertices of $\Gamma$. We note that $\Prin(\Gamma)$ is generated by $\{\Div(\phi_v) \st v \in V(\Gamma)\}$, since our PL functions are uniquely specified by their values on $V(\Gamma)$.

We are interested in divisors that are ``locally principal'', which we make precise in the following sense:

\begin{definition}\label{tropcartdef}A \textbf{tropical Cartier divisor} is a formal $\Z$-linear combination $D$ of ridges of $\Gamma$ such that for every $v \in V(\Gamma)$, there exists a PL function $\phi$ such that $D$ and $\Div(\phi)$ agree on all ridges containing $v$. We let $\Cart(\Gamma)$ be the group of Cartier divisors on $\Gamma$.\end{definition}

\begin{definition}\label{tropqcartdef}A \textbf{tropical $\Q$-Cartier divisor} is a divisor $D$ such that $mD$ is Cartier for some $m \in \Z\setminus \{0\}$. We let $\QCart(\Gamma)$ be the group of $\Q$-Cartier divisors on $\Gamma$.\end{definition}

Note that $\Prin(\Gamma) \subseteq \Cart(\Gamma) \subseteq \QCart(\Gamma)$.

\begin{definition}\label{troppicdef}The \textbf{tropical Picard group} $\Pic(\Gamma)$ of $\Gamma$ is the quotient $\Cart(\Gamma)/\Prin(\Gamma)$.\end{definition}
If two divisors differ by a principal divisor, they are said to be \textbf{linearly equivalent}. Thus, $\Pic(\Gamma)$ is the group of linear equivalence classes of Cartier divisors.

Since we are working in dimensions $1$ and $2$, we make the following definition (see \cite[p. 7]{Cartwright_Surfaces}):

\begin{definition} The \textbf{tropical divisor class group} $\text{Cl}(\Gamma)$ of $\Gamma$ is the quotient $\QCart(\Gamma)/\Prin(\Gamma)$.\end{definition} 

\textbf{Note.} As alluded to in the introduction, the definitions in this section (including that of a piecewise linear function on a weak tropical complex) are more restrictive than those in \cite{Cartwright_Paper}. In \cite{Cartwright_Paper}, the divisors above are called \textbf{ridge divisors}, and the group of Cartier ridge divisors modulo principal ridge divisors is denoted $\Pic_{\text{ridge}}(\Gamma)$. Since we are only dealing with ridge divisors in this paper, we omit the word ``ridge''.

\begin{example}
Consider the following two-dimensional complex $\Gamma$, with vertex set $\{v_1,v_2,v_3,v_4\}$ and edge set $\{e_1,e_2,e_3,e_4,e_5\}$.
\begin{center}\includegraphics{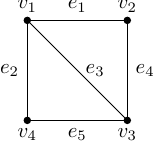}\end{center}

We put a weak tropical complex structure $\alpha$ on $\Gamma$, which we write in the form of a matrix $A$ whose $(i,j)$th entry is $\alpha(e_i,v_j)$:
$$A = \left[\begin{array}{cccc}0 & 1 & 0 & 0\\
0 & 0 & 0 & 1\\
1 & 0 & 1 & 0\\
0 & 1 & 0 & 0\\
0 & 0 & 0 & 1
\end{array}\right].$$

We recall that any PL function $\phi$ is uniquely specified by its values on the vertices of $\Gamma$. Thus, Equation (\ref{divisorequation}) tells us that we can write the homomorphism $\Div: \Z^{V(\Gamma)} \to \Z^{E(\Gamma)}$ as the following matrix $D$:

$$D = \left[\begin{array}{cccc} 0 & -1 & 1 & 0\\
0 & 0 & 1 & -1\\
-1 & 1 & -1 & 1\\
1 & -1 & 0 & 0\\
1 & 0 & 0 & -1\end{array}\right].$$

The group $\Prin(\Gamma)$ consists of the column-span of the matrix $D$. The divisor $C = [e_1] + 2[e_2] + 3[e_4] + [e_5]$ is Cartier but non-principal, since it is not in the column span of $D$. Finally, Theorem \ref{AltCart} implies that in this case all divisors are $\Q$-Cartier.

\end{example}

\subsection{Divisors on Graphs}\leavevmode

Of particular interest is the case of one-dimensional (weak) tropical complexes, i.e., graphs. 

Let $G$ be a loopless connected graph, possibly with multiple edges between a given pair of vertices, with vertex set $V(G)$ and edge set $E(G)$. Restricting Definition \ref{tropcompdef} to dimension $1$, a $1$-dimensional weak tropical complex structure on $G$ is a function $\alpha: V(G) \times V(G) \to \Z$ such that for all $v,w \in V(G)$,
$$\alpha(v,w) = \begin{cases}\deg(v), & v=w\\ 0, & v\neq w\end{cases}.$$

Thus, we see that $G$ admits exactly one tropical complex structure. Furthermore, if we take the PL function $\phi_v$ as in Section $\ref{DivisorSubsection}$, we see that
$$\Div(\phi_v) = \left(\sum_{w \in V(G)}\text{adj}(v,w)[w]\right) - \deg(v)[v],$$
where $\text{adj}(v,w)$ is the number of edges between $v$ and $w$. Since $\{\phi_v \st v \in V(G)\}$ spans the module of PL functions on $G$, we can express the map $\Div$ as a $|V(G)| \times |V(G)|$ matrix $L(G)$, with
$$L(G)_{v,w} = \begin{cases}-\deg(v), & v=w\\ \text{adj}(v,w), & v\neq w\end{cases}.$$ This matrix is exactly the \textbf{Laplacian matrix} of $G$.

A Cartier divisor on $G$ is a divisor $C$ such that, for all $v \in V(G)$, there is some principal divisor $D_v$ with $D_v(v) = C(v)$. This condition is always true, so every divisor on $G$ is Cartier.

Since $\Prin(G)$ is exactly the column-span of $L(G)$, we see that $\Pic(G) \cong \text{coker}(L(G))$. Therefore, by the Matrix-Tree Theorem, $\Pic(G) \cong \Z \oplus K(G)$, where $K(G)$ is a finite group called the \textbf{critical group} of $G$.

\begin{remark} Divisors on a graph are equivalent to positions in a certain formulation of the well-known chip-firing game on the graph $G$ \cite[Lemma 4.3]{Graph_Riemann_Roch}. In this case, principal divisors are precisely those positions that can be reached from the configuration where all vertices have no chips \cite[Lemma 4.3]{Graph_Riemann_Roch}.\end{remark}

\section{Balancing Conditions}

For the rest of this paper, we assume without loss of generality that all graphs are connected. Given a pair of graphs $G$ and $H$, the product $G\times H$ is a \textbf{cubical complex} --- a cell complex whose $0$-cells come from pairs of vertices, whose $1$-cells come from vertex-edge pairs, and whose $2$-cells are squares arising from pairs of edges. For much of this paper, we will consider triangulations $\Gamma$ of $G\times H$, obtained subdividing each square into two triangles. We call the edges of the form (edge of $G$)$\times$(vertex of $H$) \textbf{horizontal edges}, edges of the form (vertex of $G$)$\times$(edge of $H$) \textbf{vertical edges}, and new edges added in this triangulation \textbf{diagonal edges}. We define $\Diag(\Gamma)$ to be the set of diagonal edges of $\Gamma$. If $\sigma$ is a square or a triangle, we define $\diag(\sigma)$ to be the unique diagonal edge contained in $\sigma$.

\begin{center}
\includegraphics{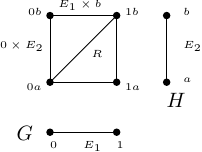}
\end{center}

In the preceding figure, $E_1 \times b$ is a horizontal edge, $0 \times E_2$ is a vertical edge, and $R$ is a diagonal edge.

We define a natural weak tropical complex structure on a triangulation $\Gamma$ of $G\times H$ as follows (this construction is due to Cartwright \cite{Cartwright_Email}; see also \cite[Example 6.2]{Cartwright_Paper_Old}). Let $v \in V(\Gamma)$ and $e \in E(\Gamma)$, and write $F(\Gamma)$ for the set of $2$-dimensional faces of $\Gamma$. Then, $$\alpha(e,v) =\begin{cases}1, & v\in e, \text{ and } e \in \Diag(\Gamma)\\ |\{\sigma \in F(\Gamma) : \; e\in \sigma, \; v\notin \diag(\sigma)\}|, & v \in e \text{ and } e \notin \Diag(\Gamma)\\ 0, & v\notin e\end{cases}.$$ 

The main result of this section is the following criterion for a divisor on $\Gamma$ to be $\Q$-Cartier. For any graph $G$ and any vertex $a$ of $G$, define $N_G(a)$ to be the set of neighbors of $a$. Note that this is a special form of the balancing condition mentioned in \cite[Section 5]{Cartwright_Paper}, but we prove it here for completeness.

\begin{theorem}[``Balancing Conditions'']\label{AltCart} Let $G$ and $H$ be graphs, and $\Gamma$ a triangulation of $G\times H$. A formal sum of ridges $D$ of $\Gamma$ is $\Q$-Cartier if and only if it satisfies the following conditions for all $v = (a,b) \in V(G\times H)$. First, for all $x \in N_G(a)$,

\begin{subequations}
\begin{equation}\label{balancing}\Xi^D_{ab}(x):= \sum_{\substack{c \in V(H)\\(xc,ab) \in E(\Gamma)}}\!\!\!\!\!\!\!\! D(xc,ab)\end{equation} 
\text{ is independent of the choice of } $x$.

\text{Second, for all }$y \in N_H(b)$,
\begin{equation}\Upsilon^D_{ab}(y):=\sum_{\substack{c \in V(G)\\(cy,ab) \in E(\Gamma)}}\!\!\!\!\!\!\!\! D(cy,ab)\end{equation}
\text{is independent of the choice of }$y$.

\end{subequations}\end{theorem}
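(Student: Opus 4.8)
The plan is to exploit that the (\(\Q\)-)Cartier condition is \emph{local}: by Definition \ref{tropcartdef} a divisor is Cartier exactly when, at each vertex \(v=(a,b)\), its restriction to the ridges through \(v\) agrees with \(\Div(\phi)\) for some PL function \(\phi\); and since a PL function is free to take arbitrary values at the vertices (being linear on each simplex), at a fixed \(v\) I only ever have to solve for the values of \(\phi\) at \(v\) and its neighbors. So I would first fix \(v=(a,b)\) and catalog the ridges through it: the horizontal edges to \((x,b)\) for \(x\in N_G(a)\), the vertical edges to \((a,y)\) for \(y\in N_H(b)\), and the diagonal edges to \((x,y)\) arising from those squares \(\{(a,b),(x,b),(a,y),(x,y)\}\) whose chosen diagonal passes through \(v\). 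The essential preliminary step is to expand \(\Div(\phi)\) on each edge type via \eqref{divisorequation}. The point is that every triangle \(f\) through such a ridge has its opposite vertex \(f\setminus r\) at a \emph{neighbor} of \(v\), and the orientation of each square's diagonal simultaneously fixes that opposite vertex and the value of \(\alpha\) at the two endpoints. Using that \(\Div\) is unchanged by adding a constant to \(\phi\) (a consequence of \(\sum_{v\in r}\alpha(r,v)=\deg(r)\)), I normalize \(\phi(a,b)=0\) and write \(h_x=\phi(x,b)\), \(u_y=\phi(a,y)\), \(d_{xy}=\phi(x,y)\); this yields clean local formulas, e.g. on the diagonal to \((x,y)\) one gets \(\Div(\phi)=h_x+u_y-d_{xy}\).

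For \textbf{necessity} (\(\Q\)-Cartier \(\Rightarrow\) balancing) I would compute \(\Xi^{\Div(\phi)}_{ab}(x)\) directly, summing the horizontal formula at \(x\) together with the diagonal formulas for those \(y\) whose square-diagonal passes through \(v\). After substituting the diagonal contributions, every term involving \(h_x\), every \(\phi(x,y)\), and the partition of \(N_H(b)\) by diagonal orientation cancels, leaving
\[\Xi^{\Div(\phi)}_{ab}(x)=\sum_{y\in N_H(b)}\phi(a,y)-\deg_H(b)\,\phi(a,b),\]
which is manifestly independent of \(x\); the symmetric computation handles \(\Upsilon\). Since the functionals \(\Xi^D_{ab}\) and \(\Upsilon^D_{ab}\) depend only on \(D\) restricted to ridges through \(v\), a Cartier divisor inherits this independence at each vertex, and replacing \(D\) by \(mD\) (for the \(m\) witnessing \(\Q\)-Cartierness) then dividing by \(m\) handles the general case.

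For \textbf{sufficiency} (balancing \(\Rightarrow\) \(\Q\)-Cartier) I would, at each \(v\), construct \(\phi\) by hand. The diagonal equations \(D=\Div(\phi)\) are simply solved for the diagonal values, \(d_{xy}=h_x+u_y-D(xy,ab)\), eliminating them. Substituting these into the horizontal equation at \(x\) collapses it --- by the very cancellation from the necessity computation --- to the single constraint \(\sum_{y\in N_H(b)}u_y=\Xi^D_{ab}(x)\), and likewise each vertical equation becomes \(\sum_{x\in N_G(a)}h_x=\Upsilon^D_{ab}(y)\). The balancing hypotheses say precisely that the right-hand sides are independent of \(x\) and \(y\), so the whole system reduces to the two equations \(\sum_y u_y=\Xi^D_{ab}\) and \(\sum_x h_x=\Upsilon^D_{ab}\), which, involving the disjoint unknowns \(\{u_y\}\) and \(\{h_x\}\), are solvable over \(\Z\) as long as \(N_G(a)\) and \(N_H(b)\) are nonempty --- automatic for connected \(G,H\). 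Choosing any solution and extending \(\phi\) by arbitrary values away from \(v\) gives a PL function certifying that \(D\) is Cartier at \(v\); doing this at every vertex shows \(D\) is Cartier, hence \(\Q\)-Cartier.

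The main obstacle is the bookkeeping in the preliminary edge-by-edge expansion: one must correctly record, for each square at \(v\), which diagonal was chosen, since that single choice controls both the opposite vertex of the triangle and the value of \(\alpha\) at the endpoints, and any slip here would corrupt both the necessity cancellation and the collapse of the linear system. A secondary technical nuisance is multigraphs: when \(G\) or \(H\) has parallel edges, several squares (and hence several diagonal ridges) may join \(v\) to the same \((x,y)\), so the sums must be indexed by squares rather than by neighboring vertices; this complicates the notation but not the structure of the argument. I note finally that, because the reduced system always admits an \emph{integer} solution, the proof in fact shows that the balancing conditions characterize Cartier and \(\Q\)-Cartier divisors simultaneously.
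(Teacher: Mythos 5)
Your proposal is correct for simple graphs, but it takes a genuinely different route from the paper's proof. The paper argues matrix-theoretically: it encodes local principality at $v$ as membership in the column span of a local matrix $M_v$, verifies in a four-case analysis (one case for each type of generator $\phi_w$, $w$ ranging over $v$ and its horizontal, vertical, and diagonal neighbors) that every column of $M_v$ satisfies the balancing conditions, and then proves the converse by a rank count: the balancing conditions give $\deg(a)+\deg(b)-2$ independent vectors in the left kernel of $M_v$, and a nonvanishing minor (computed via the Schur complement $\det(A-BB^{T})$) shows $\mathrm{rank}(M_v)$ is large enough that these conditions span the entire left kernel over $\Q$. You replace both halves by closed-form local computation: the identity $\Xi^{\Div(\phi)}_{ab}(x)=\sum_{y\in N_H(b)}\phi(a,y)-\deg_H(b)\,\phi(a,b)$ (which checks out --- the $\phi(x,b)$ and $\phi(x,y)$ contributions do cancel between the horizontal edge and the diagonals through $v$) gives necessity, and for sufficiency you solve the local system explicitly: eliminating each diagonal unknown via $d_{xy}=h_x+u_y-D(xy,ab)$ collapses the horizontal and vertical equations to $\sum_y u_y=\Xi^D_{ab}(x)$ and $\sum_x h_x=\Upsilon^D_{ab}(y)$, which under balancing are two disjoint sum-equations solvable over $\Z$. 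This buys something real: because your solution is integral, you show balancing characterizes \emph{Cartier} divisors, not merely $\Q$-Cartier ones, so that locally integrality is free and $\Cart(\Delta)=\QCart(\Delta)$, i.e.\ $\Pic(\Delta)=\text{Cl}(\Delta)$ for simple $G,H$; the paper's rank argument is intrinsically rational and cannot see this. (It is consistent with, and would explain, the remark in the paper's example that $\text{Cl}(\Delta)$ and $\Pic(\Delta)$ coincide there.)

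One caveat: your closing remark that multigraphs are ``a notational nuisance but not structural'' is wrong. If $G$ has parallel edges $e_1,e_2$ between $a$ and $x$, and $H$ has an edge $\{b,y\}$, the two squares $e_i\times\{b,y\}$ produce two distinct diagonal ridges joining the same pair of vertices; the single unknown $\phi(x,y)$ then occurs in two diagonal equations, so your elimination forces $D$ to take equal values on the parallel diagonals --- something balancing does not imply. In fact the theorem itself fails there: take $D$ equal to $+1$ on one parallel diagonal, $-1$ on the other, and $0$ elsewhere. Every balancing condition is vacuous (each vertex has a unique neighbor in each factor), yet every $\Div(\phi)$ takes equal values on parallel diagonals, so no nonzero multiple of $D$ is locally principal at $(a,b)$. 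Thus both your argument and the statement require $G$ and $H$ simple --- a hypothesis the paper also uses silently (its minor construction assumes distinct diagonal edges at $v$ have distinct far endpoints, which fails in exactly this situation), so this is a defect of the theorem's stated generality rather than of your approach specifically; but you should state the hypothesis rather than claim the multigraph case follows.
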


\begin{proof}We recall that a tropical Cartier divisor $D$ is precisely one that is locally a principal divisor at every vertex $v$ in $G\times H$. By ``locally'' we mean that there is some principal divisor that agrees with the restriction of $D$ on the \emph{graph} star $E_{\Gamma}(v)$ of $v$ --- the union of the collection of edges containing $v$.

So, suppose we have a Cartier divisor $D$ on $\Gamma$. Fix an ordering $v_1, \dots, v_n$ of $V(\Gamma)$ and an ordering $e_1, \dots, e_m$ of $E(\Gamma)$. The set of principal divisors on $\Gamma$ is precisely the column span of the $(m\times n)$ matrix $M$ whose entries $i,j$ are given by: $$M_{ij} = \begin{cases} \alpha(e_i,v_j), & v_j \in e_i\\ -1, & e_i \in \link_{\Gamma}(v_j)\\ 0, & \text{otherwise}\end{cases}.$$

The matrix $M$ is the negation of the matrix of the map $\Div$ in Equation \eqref{divisorequation}. The negation does not affect the column span, and is more convenient for the purposes of this proof. 

Let $M_{v_k}$ be the submatrix of $M$ consisting of the rows of $M$ labeled by edges in $E_{\Gamma}(v_k)$. A divisor is locally principal at $v_k$ if its restriction to $E_{\Gamma}(v_k)$ is in the column span of $M_{v_k}$.

\textbf{Part 1:} Show that $\Q$-Cartier divisors satisfy the conditions of Theorem \ref{AltCart}.

Fix a vertex $v = ab \in \Gamma$, with $a \in G$ and $b \in H$. It suffices show that every column of $M_v$ satisfies the conditions of Theorem \ref{AltCart}, since a divisor $D$ is balanced if and only if its integer multiples are. Let $w$ be a vertex in $\Gamma$. The column of $M_v$ labeled by $w$ can be viewed as a divisor $D$ on $\Gamma$.

\underline{Case 1: $w=v$.} Every entry of $D$ is of the form $\alpha(e,v)$, where $v\in e$. Every edge containing $ab$ is of the form $(ab,xc)$. If $b = c$, $(ab,xc)$ is a horizontal edge; if $x=a$, it is a vertical edge; and if $a\neq x$ and $c\neq b$, it is a diagonal edge.

 Let $x\in N_G(a)$. In this case, $D(ab,xc) = \alpha((ab,xc),ab)$, so in particular $\alpha((ab,xc),ab) = 1$ when $b\neq c$. We write \begin{align*}\sum_{\substack{c \in V(H):\\ (ab,xc) \in E(\Gamma)}}D(ab,xc) &= D(ab,xb) + \sum_{\substack{ c \in V(H)\setminus b:\\ (ab,xc) \in E(\Gamma)}}D(ab,xc)\\ &= \alpha((ab,xb),ab) + |\{(ab,xc) \in E(\Gamma) \st c\neq b\}|.\end{align*} 

Now, $\alpha((ab,xb),ab) = \#\{\sigma \in F(\Gamma) : \; (ab,xb) \in \sigma, ab \notin \diag(\sigma)\}$. We note that every triangle containing the edge $(ab,xb)$ is either of the form $\{ab,xb,xc\}$ or of the form $\{ab,xb,ac\}$, with $c \in N_H(b)$. Triangles of the former type must include the diagonal edge $(ab,xc)$, while triangles of the latter type contain the diagonal edge $(ac,xb)$. In other words, $\alpha((ab,xb),ab)$ counts the number of triangles containing $(ab,xb)$ of the latter type, while every triangle of the former type containing $(ab,xb)$ gives rise to a diagonal edge containing $ab$. Thus, $$\alpha((ab,xb),ab) = |N_H(b)| -  |\{(ab,xc) \in E(\Gamma) \st c\neq b\}|,$$ so we see that \begin{align*}\sum_{\substack{c \in V(H):\\ (ab,xc) \in E(\Gamma)}}D(ab,xc) &= |N_H(b)| -  |\{(ab,xc) \in E(\Gamma) \st c\neq b\}| + |\{(ab,xc) \in E(\Gamma) \st c\neq b\}|\\ &= |N_H(b)|.\end{align*} The choice of $x$ was arbitrary, so $\Xi^D_{ab}(x)$ is independent of $x$.

\underline{Case 2: $w=a'b'$.} In this case, the edge $(ab,a'b')$ is a diagonal edge. Thus, we can compute $D$ explicitly: $$D(ab,xy) = \begin{cases} -1, & \{ab, xy, a'b'\} \in \Gamma\\ 1, & xy=a'b'\\ 0, & \text{otherwise}\end{cases}.$$

Fix $x \in N_G(a)$. If $x = a'$, then $$\Xi_{ab}^D(a')=\sum_{\substack{c \in V(H):\\ (ab,a'c) \in E(\Gamma)}} D(ab,a'c) = D(ab,a'b) + D(ab,a'b') + \sum_{c \in N_H(b)\setminus \{b'\}}D(ab,xc).$$ 

Since $(ab,a'b')$ is a diagonal edge in $\Gamma$, the horizontal edge $(ab,a'b)$ must be in $\Gamma$ as well. This means that the triangle $\{ab,a'b,a'b'\}$ is in $\Gamma$. Thus, $ D(ab,a'b) = -1$. By definition, $D(ab,a'b') = 1$. Finally, we note that for $c \in N_H(b)\setminus\{b'\}$, $D(ab,a'c) = 0$, since if $c\neq b$ and $c\neq b'$, we cannot have a triangle of the form $\{ab,a'c,a'b'\}$. Thus, $\Xi^D_{ab}(a') = 0$.

If $x\neq a'$, then $D(ab,a'b')$ never appears in $\Xi^D_{ab}(x)$. Moreover, $a$, $x$, and $a'$ are all distinct, so $\{ab,xc,a'b'\}$ can never be a triangle in $\Gamma$. Thus, $\Xi^D_{ab}(x) = 0$.

\underline{Case 3: $w=ab'$.} In this case the edge $(ab,ab')$ is a vertical edge. Thus, $$D(ab,xc) = \begin{cases} -1, & \{ab,xc,ab'\} \in \Gamma\\ |\{a' \st \{ab,ab', a'b'\} \in \Gamma\}|, & xc = ab'\\ 0, & \text{otherwise}\end{cases}.$$

By assumption, we choose $x \in N_g(a)$, so $x\neq a$. Thus the middle case can never occur in $\Xi_{ab}^D(x)$. Furthermore, for all $c \in N_H(b)\setminus\{b'\}$, $D(ab,xc) = 0$ --- in this case, $(ab,xc)$ always falls into the third case of $D(ab,xc)$.

 Now, for any fixed $x$, there are only two triangles of the form $\{ab,ab',xc\}$ that can exist in $\Gamma$. These two triangles are $\{ab,ab',xb\}$, and $\{ab,ab',xb'\}$. However, $\{ab,ab',xb,xb'\}$ is a square in $G\times H$. Since $\Gamma$ is a fixed triangulation of $G\times H$, $\Gamma$ contains exactly one of the edges $(ab,xb')$ and $(ab',xb)$. Thus, exactly one of $D(ab,xb)$ and $D(ab,xb')$ is zero, and the other is $-1$. Thus, $\Xi_{ab}^D(x) = -1$ for any choice of $x \in N_G(a)$.

\underline{Case 4: $w=a'b$.} In this case, $$D(ab,xc) = \begin{cases} -1, & \{ab,a'b,xc\} \in \Gamma\\ |\{b' \st \{ab, a'b, a'b'\} \in \Gamma\}|, & xc = a'b\\ 0, & \text{otherwise}\end{cases}.$$

We note that if $x\neq a'$, then neither of the first two cases occurs in $\Xi^D_{ab}(x)$. Since $x\neq a'$, $\{ab,a'b,xc\}$ can never be a triangle, and neither will $D(ab,a'b)$ appear in our summation. Thus, $\Xi^D_{ab}(x) = 0$.

If $x=a'$, then $D(ab,a'b) = |\{b' \st \{ab, a'b, a'b'\}\in \Gamma\}|$, and $$\sum_{\substack{c \in N_H(b):\\ (ab,a'c) \in E(\Gamma)}}D(ab,a'c) = -1 \cdot |\{c \st \{ab,a'b,a'c\} \in \Gamma\}|.$$ Thus, $$\sum_{\substack{c \in V(H):\\ (ab,a'c) \in E(\Gamma)}} D(ab,a'c) = D(ab,a'b) + \sum_{\substack{c \in N_H(b):\\ (ab,a'c) \in E(\Gamma)}}D(ab,a'c) = 0,$$ so $\Xi^D_{ab}(x)$ is again independent of $x$.

We note that for all of the cases above, analogous arguments would hold for $\Upsilon^D_{ab}(y)$.

\textbf{Part 2:} Show that for any $v=(a,b) \in V(G\times H)$, the equations $$\left\{\Xi^D_{ab}(x) = \Xi^D_{ab}(x') : x, x' \in N_G(a)\right\} \cup \left\{\Upsilon_{ab}^D(y) = \Upsilon_{ab}(y') : y,y'\in N_H(b)\right\},$$ which we call \textbf{balancing conditions}, span the left kernel of $M_v$ over $\Q$ (i.e., they generate all $\Q$-linear relations on its rows).

We claim that there are at least $\deg(a) + \deg(b) - 2$ linearly independent balancing conditions. Suppose that $\{v_1,\dots,v_n\}$ is the set of neighbors in $G$ of $a$ and $\{w_1,\dots, w_m\}$ is the set of neighbors of $b$ in $H$. Then 
\begin{align*}
\sum_{c \in V(H)} D(v_1c,ab) &= \sum_{c \in V(H)} D(v_2c, ab) & \sum_{c \in V(G)} D(cw_1,ab) &= \sum_{c \in V(G)} D(cw_2, ab)\\ 
\sum_{c \in V(H)} D(v_1c,ab) &= \sum_{c \in V(H)} D(v_3c, ab) & \sum_{c \in V(G)} D(cw_1,ab) &= \sum_{c \in V(G)} D(cw_3, ab)\\ 
&\;\;\vdots &\;\;\vdots\\ 
\sum_{c \in V(H)} D(v_1c,ab) &= \sum_{c \in V(H)} D(v_nc, ab) & \sum_{c \in V(G)} D(cw_1,ab) &= \sum_{c \in V(G)} D(cw_m, ab)\end{align*} is a collection of $\deg(a) +\deg(b) - 2$ linearly independent linear relations among the values of $D$ on the edges of $\Gamma$ --- observe that for $i > 1$, the term $D(v_ic,ab)$ occurs only in the $(i-1)$st equation in the left-hand column, and for $j>1$, the term $D(cw_j,ab)$ occurs only in the $(j-1)$st equation in the right hand column.

Recall that the rows of $M_v$ are indexed by the edges of the graph star $E_{\Gamma}(v)$ of $v$. Thus, \begin{align*}\text{rank}(M_v) &\leq|E_{\Gamma}(v)| - (\deg(a) + \deg(b) -2)\\ &= |E_{\Gamma}(v)\cap\Diag(\Gamma)| =: \delta(v).\end{align*}

We will construct a submatrix of $M_v$ with $\delta(v)$ rows, and show that some $\delta(v) \times \delta(v)$ minor of this submatrix does not vanish. This will show that $\text{rank}(M_v) \geq \delta(v)$.

\underline{Case 1: $v$ is contained in at least one diagonal edge.} Let $\{E, E_1,\dots, E_m\}$ be the set of diagonal edges containing $v$, and let $U$ and $H$ be the vertical and horizontal edges of the square that contains $E$. Since $E$ contains $v$, $U$ and $H$ must contain $v$ as well. Let $S_v$ be the submatrix of $M_v$ consisting of the rows labeled by $\{E,U,H,E_1,\dots,E_m\}$, in that order. Let $\{d, u, h, d_1,\dots,d_m\}$ be the neighbors of $v$ contained in the edges $\{E,U,H, E_1,\dots, E_m\}$, respectively, and then define $R_v$ to be the submatrix of $S_v$ consisting of the columns of $S_v$ labeled by $\{d, u, h, d_1,\dots, d_m\}$, in that order.

We see that $R_v$ is a $\delta(v) \times \delta(v)$ square matrix, with the following block form:

$$R_v = \left(\begin{array}{cc} A & B\\ B^{T}& I\end{array}\right)$$
Let $x$ be a vertex and $Q$ be an edge. Then, $$R_v(Q,x) = \begin{cases}\alpha(Q,x), & x \in Q\\ -1, & Q \in \link_{\Gamma}(x)\\ 0, & \text{else}\end{cases},$$ so block $A$ has the form 
$$A = \left(\begin{array}{ccc}
1 & -1 & -1\\
-1 & \alpha(U,u) & 0\\
-1 & 0 & \alpha(H,h)\end{array}\right).$$

On the other hand, $I$ is an identity matrix. $E_i$ and $E_j$ share no common triangles when $i\neq j$ (so $E_i$ is never in $\link(d_j)$ or vice-versa), and $\alpha(d_i,E_i) = 1$ for all $i$ by definition of $\alpha$ for diagonal edges.

Since $E$ is the only diagonal edge that shares a square with both $U$ and $H$, we see that $B$ has the following form:

$$B = \begin{blockarray}{ccccccccccc}
& E_1 & \dots & E_k & E_{k+1} & \dots & E_{k+\ell} & E_{k+\ell+1} & \dots & E_m\\
\begin{block}{c(cccccccccc@{\hspace*{5pt}})}
    d & 0 & \cdots & 0 & 0 & \cdots & 0 & 0 & \cdots & 0 &\\
    u & -1 & \cdots & -1 & 0 & \cdots & 0 & 0 & \cdots & 0 &\\
    h & 0 & \cdots & 0 & -1 & \cdots & -1 & 0 & \cdots & 0 &\\\end{block}
\end{blockarray}\;,$$

where $\{E_1, \dots, E_k\}$ are the diagonal edges containing $v$ that are in $\link(u)$ and $\{E_{k+1}, \dots, E_{k+\ell}\}$ are the diagonal edges containing $v$ that are in $\link(h)$.

Now, \begin{align*}R_v =  \left(\begin{array}{cc} A & B\\ B^{T}& I\end{array}\right) &=  \left(\begin{array}{cc}I & B \\ 0 & I\end{array}\right)\left(\begin{array}{cc} A - BB^{T} & 0\\ B^{T} & I\end{array}\right),\\ \therefore \det\left(\begin{array}{cc}A & B\\ B^{T}& I\end{array}\right) &= \det(A-BB^{T}).\end{align*}

We see $$BB^{T} = \left(\begin{array}{ccc}0 & 0 & 0\\ 0 & k & 0\\ 0 & 0 & \ell\end{array}\right),$$ where $k= \#\{i : \; E_i \in \link(u)\}$, and $\ell = \#\{i : \;E_i \in \link(h)\}$. Thus, $$A - BB^{T} = \left(\begin{array}{ccc}1 & -1 & -1\\ -1 & \alpha(U,u) - k & 0\\ -1 & 0 & \alpha(H,h) - \ell\end{array}\right),$$ so $\det(A - BB^{T}) = (\alpha(U,u) - k)(\alpha(H,h) - \ell) - (\alpha(U,u) - k) - (\alpha(H,h) - \ell)$. By definition, $$\alpha(U,u) = |\{\sigma \in F(\Gamma) : U \in \sigma, u \notin \diag(\sigma)\}|,$$ i.e. the number of triangles of the form shown below.

\begin{center}
\includegraphics{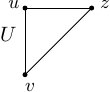}
\end{center}

The set of such triangles is in bijection with $\{e \in \Diag(\Gamma): \; v\in e, e\in \link(u)\}$. On the other hand, $k = |\{Q \in E_{\Gamma}(v) \cap \Diag(\Gamma) : Q \in \link_{\Gamma}(u), Q\neq E\}|$. Thus, $\alpha(U,u) = k + 1$, so $\alpha(U,u) - k = 1$. By an analogous argument, $\alpha(H,h) - \ell = 1$. Thus, $\det(R_v) = -1 \neq 0$.

\underline{Case 2: $v$ is not contained in any diagonal edges.} In this case, $\delta(v) = 2$, so we need to find some nonvanishing $2\times 2$ minor of $M_v$. Let $U$ be a vertical edge containing $v$, and let $H$ be a horizontal edge containing $v$. We write $U = vu$ and $H = vh$, and we let $R_v$ be the $2\times 2$ submatrix of $M_v$ whose columns are indexed by $v$ and $h$ respectively, and whose rows are indexed by $U$ and $H$, respectively. Then, $$R_v = \left(\begin{array}{cc} \alpha(U,v) & -1\\ \alpha(H,v) & \alpha(H,h)\end{array}\right).$$

Now, $\alpha(H,h)=0$, because by assumption, $v$ is not contained in any diagonal edge, so every triangle $\sigma$ containing $H$ must be of the form:

\begin{center}
\includegraphics{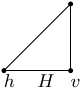}
\end{center}

where $h\in\diag(\sigma)$. On the other hand, $\alpha(H,v)>0$, since $\alpha(H,v) + \alpha(H,h) = \deg(H) > 0$ (recall that $\deg(H)$ is the number of facets containing $H$). Thus, $\det(R_v) = \alpha(U,v)\cdot 0 - (-1)(\deg(H)) = \deg(H)$. So $\text{rank}M_v \geq 2$, as desired. \end{proof}

The characterization of $\Q$-Cartier divisors given by balancing conditions is useful both as a technical tool (as will be demonstrated in later proofs), and as a means of making tropical $\Q$-Cartier divisors more understandable. We have already seen that the principal divisors on a weak tropical complex are the vectors in the column span of an easily-constructed matrix, and the balancing conditions allow us to construct a matrix whose kernel consists of the $\Q$-Cartier divisors.

\begin{example} Let $G = H = P_2$, the path with two edges. The Laplacian $L(G)$ of $G$ is 
$\left[\begin{array}{ccc}
-1 & 1 & 0\\
1 & -2 & 1\\
0 & 1 & -1	
\end{array}\right]$, which has rank $2$, so $\text{Coker}(L(G))$ is given by $\Z^3/\text{span}([-1,1,0],[0,1,-1]) \cong \Z$. Thus, $\Pic(G) \cong \Pic(H) \cong \Z$.
\begin{center}
\includegraphics{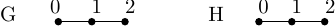}
\end{center}
 
The triangulation $\Gamma$ of $G\times H$ is shown below, with edges labeled by the letters $a$ through $p$:
\begin{center}
\includegraphics{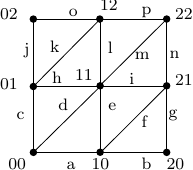}
\end{center}

A linear combination $D$ of edges must satisfy the following equations in order to be a $\Q$-Cartier divisor on $\Gamma$:
\begin{align*}D(a) &= D(b)+D(f)\\
D(h)+D(d) &= D(i)+D(m)\\
D(o)+D(k)&=D(p)\\
D(c)&=D(j)+D(k)\\
D(d)+D(e) &=D(l)+D(m)\\
D(f)+D(g) &= D(n)
\end{align*}

We rearrange all of these equations so that one side is equal to $0$, and hence can express a $\Q$-Cartier divisor $D$ as a vector in the kernel of the $6\times 16$ matrix $C$ whose columns are labeled by the edges of $\Gamma$ and whose rows are the characteristic vectors of the equations above.

We can also express the principal divisors on $\Gamma$ in terms of a matrix: the ridges of $\Gamma$ are the edges of $\Gamma$, and a $PL$-function on $\Gamma$ is uniquely determined by the values it takes on the vertices of $\Gamma$. We define a matrix $P$ with columns indexed by vertices of $\Gamma$ and rows indexed by edges of $\Gamma$, with the column corresponding to a vertex $v$ given by $\Div(\phi_v)$.

Thus, the divisor class group of $\Gamma$ can be viewed as $\ker(C)/\text{Im}(P)$. Using a computer algebra system (in this instance, Sage \cite{sage}), we see that $\text{Cl}(\Gamma) \simeq \Z^2$. Note that $\Pic(G) \times \Pic(H) \cong \Pic(\Gamma) \cong \Z^2$, by the main theorem of this paper (\ref{MainTheorem}) since $G$ and $H$ are both trees. In fact, Proposition \ref{eCart} implies that $\text{Cl}(\Gamma)$ and $\Pic(\Gamma)$ coincide in this case. \end{example}

\begin{remark} The balancing conditions from this section have a similar structure to the definition of a \emph{harmonic morphism} (see \cite[Section 2]{Hyperelliptic}). Briefly, if $G$ and $H$ are graphs, a \emph{morphim} 
$$\phi: (V(G)\sqcup E(G)) \to (V(H) \sqcup E(H))$$ 
(for convenience, we write $\phi: G \to H$) is a set map such that
\begin{enumerate}
\item $\phi(V(G)) \subseteq V(H)$
\item if $x \in V(G)$, $e \in E(G)$ with $x \in e$, either $\phi(x) = \phi(e)$, or $\phi(e) \in E(H)$ and $\phi(x) \in \phi(e)$.
\end{enumerate} 

A morphism $\phi:G \to H$ is \emph{harmonic} if, for all $x \in V(G)$ and $y \in V(H)$ such that $\phi(x) = y$, the quantity
$$|\{e \in E(G) \st x \in e, \phi(e) = e'\}|$$
is independent of the choice of edge $e'$ containing $y$.

Now, let $G$ and $H$ be simple graphs, let $\Gamma$ be a triangulation of $G \times H$ as in this section, and let $S$ be the $1$-skeleton of $\Gamma$ (i.e. the graph consisting of the vertices of $\Gamma$ and the edges of $\Gamma$). We observe that there is only one graph morphism from $S$ to $G$ that acts as projection onto the first coordinate when restricted to the vertex set $V(S) = V(G) \times V(H)$.

Indeed, suppose that $v \in V(G), w \in V(H)$, and that $\phi_G$ is a graph morphism from $S$ to $G$ with $\phi_G(v,w) = v$. If $e \in E(S)$ is a vertical edge of $\Gamma$ containing $(v,w)$, then $\phi_G(e) = \phi(v,w) = v$, since the other endpoint of $e$ is also mapped to $v$ under $\phi$ (and by assumption $G$ contains no loops). If $e \in E(S)$ is a horizontal or diagonal edge of $\Gamma$ that contains $(v,w)$, then the other endpoint of $e$ is mapped by $\phi_G$ to some neighbor $v'$ of $v$, so $e$ must be mapped to the edge $vv' \in E(G)$. Similarly, there is a unique graph morphism $\phi_H$ from $S$ to $H$ that restricts to projection onto the second coordinate on the vertex set of $S$.

The morphisms $\phi_G$ and $\phi_H$ are not harmonic in general. However, the balancing conditions for a $\Q$-Cartier divisor $D$ on $\Gamma$ are equivalent to saying that for any vertex $v \in V(G)$ and $w \in V(H)$, the sums
$$\sum_{\{r \in E(S) \st (v,w) \in r, \phi_G(r) = e\}} D(r)$$
and
$$\sum_{\{r \in E(S) \st (v,w) \in r, \phi_H(r) = f\}} D(r)$$
are independent of the choice of $e \in E(G)$ with $v \in E$ and the choice of $f \in E(H)$ with $w \in f$. Thus, we see that the divisor $$C = \sum_{r\in E(\Gamma)}[r]$$ is $\Q$-Cartier if and only if the maps $\phi_G$ and $\phi_H$ are harmonic.\end{remark}

\section{Cartwright's Conjecture}

Let $G$ and $H$ be graphs, and $\Gamma$ a triangulation of $G\times H$. Cartwright defined a map $\beta: \Div(G) \times \Div(H) \to \Div(\Gamma)$ \cite{Cartwright_Email} as follows. Let $C$ be a divisor on $G$ and $D$ be a divisor on $H$. Then, $$C = \sum_{v \in V(G)}C(v)[v]$$ and $$D = \sum_{w\in V(H)}D(w)[w],$$ so we define $$\beta(C,D):=\displaystyle\sum_{v\in V(G),r\in E(H)}C(v)[(v,r)] + \sum_{e \in E(G), w \in V(H)}D(w)[(e,w)],$$ where $(v,r)$ and $(e,w)$ are edges in $\Gamma$. We observe that $\beta$ is injective.

\begin{proposition} The map $\beta$ sends principal divisors to principal divisors.\end{proposition}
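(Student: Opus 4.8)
The plan is to exhibit, for each principal divisor in the image, an explicit PL function on $\Delta$ whose divisor it is. Since $\beta$ is $\Z$-linear (immediate from its definition) and every element of $\Prin(G)\times\Prin(H)$ has the form $(\Div(\phi),\Div(\psi))$ for PL functions $\phi$ on $G$ and $\psi$ on $H$, linearity gives $\beta(\Div(\phi),\Div(\psi))=\beta(\Div(\phi),0)+\beta(0,\Div(\psi))$, so it suffices to treat each summand. Moreover the construction of $\Delta$ and of $\alpha$ is symmetric under interchanging $G$ with $H$ (equivalently, horizontal edges with vertical edges), so it is enough to prove that $\beta(\Div(\phi),0)\in\Prin(\Delta)$ for an arbitrary PL function $\phi$ on $G$.

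To produce a witnessing PL function I would pull $\phi$ back along the projection $\pi_G\colon\Delta\to G$: set $\Phi=\phi\circ\pi_G$, that is, $\Phi(a,b)=\phi(a)$ on each vertex $(a,b)$ of $\Delta$. This $\Phi$ is a legitimate PL function on $\Delta$, since every triangle of $\Delta$ projects onto an edge or a vertex of $G$, where $\phi$ is affine with integer slope; hence $\Phi$ is affine with integer slope on each triangle and continuous. The single identity to verify is then $\Div(\Phi)=\beta(\Div(\phi),0)$, which I would check coefficientwise using \eqref{divisorequation}, splitting into the three edge types. (Together with the symmetric computation this yields the clean statement $\beta(\Div(\phi),\Div(\psi))=\Div(\phi\circ\pi_G+\psi\circ\pi_H)$.)

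For a vertical edge $r=(v,s)$ both endpoints share the first coordinate $v$, hence the common $\Phi$-value $\phi(v)$; the term $\sum_{w\in r}\alpha(r,w)\Phi(w)$ therefore collapses to $\big(\sum_{w\in r}\alpha(r,w)\big)\phi(v)=\deg(r)\,\phi(v)$ by the defining identity of a weak tropical complex (Definition \ref{tropcompdef}), and $\deg(r)=\deg_G(v)$. The triangles containing $r$ are indexed by the edges $e=vx$ of $G$ at $v$, each contributing an opposite vertex with first coordinate $x$, so the facet sum is $\sum_{e=vx}\phi(x)$; the difference is exactly $\Div(\phi)(v)$, which is the coefficient $\beta$ assigns to $(v,s)$. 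For a diagonal edge the two bounding triangles lie in one square and their opposite vertices restore the two endpoint values, while $\alpha\equiv1$ there, giving coefficient $0$. For a horizontal edge $r=(e,w)$ the two endpoints carry different $\Phi$-values, and both the facet sum and the $\alpha$-term depend on how the squares around $r$ were triangulated; tracking square by square whether the chosen diagonal passes through a given endpoint, the two contributions cancel to give $0$, again matching $\beta$.

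I expect the horizontal-edge case (and to a lesser extent the diagonal case) to be the main obstacle. The cancellation there is not formal: in each square adjacent to $r$ the choice of diagonal simultaneously decides which endpoint of $r$ fails to lie on that diagonal, and so picks up an $\alpha$-count, and which opposite vertex (hence which of the two endpoint values of $\phi$) enters the facet sum; these two effects must be matched square by square. This is precisely where the explicit formula for $\alpha$ on non-diagonal edges is needed, in contrast with the vertical case, which uses only the global balancing identity $\sum_{w\in r}\alpha(r,w)=\deg(r)$. Once this bookkeeping is carried out, $\Div(\Phi)$ agrees with $\beta(\Div(\phi),0)$ on every edge, and the symmetric argument together with linearity completes the proof.
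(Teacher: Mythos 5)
Your proposal is correct and takes essentially the same route as the paper: the paper reduces by linearity to the Laplacian columns and exhibits $\beta(0,L_v)$ as $\sum_{w\in V(G)}\Div(\phi_{wv})$, and since a PL function on $\Delta$ is determined by its vertex values, that sum of vertex-indicator functions is exactly the pullback of $\phi_v$ along the projection $\Delta\to H$ --- the $G\leftrightarrow H$ mirror of your $\Phi=\phi\circ\pi_G$, applied to basis functions rather than to a general $\phi$. The verification is likewise the same: a coefficientwise check over the three edge types, with the vertical case using $\sum_{w\in r}\alpha(r,w)=\deg(r)$ and the diagonal/horizontal cases resolved by the square-by-square cancellation between the facet sum and the $\alpha$-term that you describe.
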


\begin{proof} The principal divisors on a graph are the vectors in the column span of the Laplacian matrix of that graph. For $w \in V(G)$, let $L_w$ be the column of the Laplacian of $G$ corresponding to $w$, and for $v \in V(H)$, let $L_v$ be the column of the Laplacian of $H$ corresponding to $v$. It is clear that $\beta$ is linear on $\Z\{V(G) \sqcup V(H)\}$, so in order to show that $\beta(\Prin(G) \times \Prin(H)) \subseteq \Prin(\Gamma)$, it suffices to show that $\{\beta(0,L_{v}) \st v \in V(H)\}$ and $\{\beta(L_{w},0) \st w \in V(G)\}$ are sets of principal divisors in $\Gamma$.

Fix some vertex $v \in H$. By the definition of $\beta$, we know that $$\beta(0,L_v) = \sum_{e \in E(G)}\deg(v)[(e,v)] - \sum_{ v' \in N_H(v)}\sum_{e \in E(G)}\text{adj}(v,v')[(e,v')],$$ where $\text{adj}(v,v')$ is the number of edges between $v$ and $v'$ in $H$.

We claim that \begin{equation}\label{importantthing}\beta(0,L_v) = \sum_{w \in V(G)}P(w,v),\end{equation} where $P(w,v) = \Div(\phi_{wv})$, and $\phi_{wv}$ is the PL function that is $1$ at vertex $wv$ and $0$ at all other vertices. We know that \begin{equation}\label{stupidthing}\sum_{w \in V(G)}P(w,v) = \sum_{w \in V(G)}\left(\underbrace{\sum_{r \in \link_{\Gamma}(wv)} -[r]}_{A} + \underbrace{\sum_{\substack{r \in \Diag(\Gamma):\\ wv \in r}}[r]}_B + \underbrace{\sum_{\substack{r \notin \Diag(\Gamma):\\ wv \in r}}\alpha(r,wv)[r]}_C\right).\end{equation}

For every $r \in E(\Gamma)$ we will show that the coefficient of $[r]$ in $\beta(0,L_v)$ is equal to the coefficient of $[r]$ in the right-hand side of \eqref{stupidthing}. We treat the cases of diagonal, vertical, and horizontal edges separately.

\underline{Case 1: $r\in \Diag(\Gamma)$.} We know that the coefficient in $\beta(0,L_v)$ of $[r]$ is $0$. On the other hand, in $P(w,v)$, $[r]$ can only occur in terms $A$ or $B$. There is exactly one square containing $r$ (since $r$ is a diagonal edge), and it has two possible forms:

\begin{center}
\includegraphics{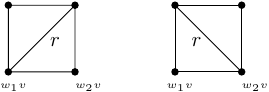}
\end{center}

Thus, we get a contribution of $-[r]$ either from $P(w_1,v)$ (in the second case), or from $P(w_2,v)$ (in the first case), and a contribution of $[r]$ from the other, so the coefficient of $[r]$ on the right-hand side of \eqref{stupidthing} is $0$.

\underline{Case 2: $r$ is a vertical edge of $\Gamma$.} We observe that $\beta(0,L_v)(r) = 0$. For any fixed $w$ in $V(G)$, $[r]$ can only occur in terms $A$ or $C$ in Equation \eqref{stupidthing}. 

Consider a square in $\Gamma$ containing $r$. Again, it has exactly two possible forms:

\begin{center}
\includegraphics{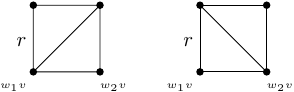}
\end{center}

The left square, $S$, contributes $0$ to the coefficient of $[r]$, because $w_1v$ is on $\diag(S)$, so $S$ not counted in $\alpha(r,w_1v)$, and $r \notin \link(w_2v)$. The right square, $T$, contributes $1$ to $\alpha(r,w_1v)$ in sum $C$ of $P(w_1,v)$ since $w_1v$ is not on the diagonal of $T$, but it contributes $-1$ to $\alpha(r,w_2v)$ in sum $A$ of $P(w_2,v)$, since $r \in \link_{\Gamma}(w_2v)$. Thus, the coefficient of $[r]$ in $\displaystyle\sum_{w \in V(G)}P(w,v)$ is $0$.

\underline{Case 3: $r$ is a horizontal edge of $\Gamma$.}

\underline{Case 3a: $wv \notin r$, $r \notin \link_{\Gamma}(wv)$ for any $w \in V(G)$.} The coefficient of $[r]$ in equation \eqref{stupidthing} must be $0$. Furthermore, the coefficient of $[r]$ in $\beta(0,L_v)$ is also $0$, by definition.

\underline{Case 3b: $wv \notin r$, $r \in \link_{\Gamma}(wv)$ for some $w \in V(G)$.} We can write $r = (e,v')$ for $e \in E(G)$, $v' \in N_H(v)$. Each square containing $r$ can have one of two forms:

\begin{center}
\includegraphics{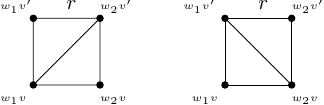}
\end{center}

For each $w \in V(G)$ such that $r \in \link_{\Gamma}(wv)$, the term $[r]$ only appears in sum $A$ of $P(w,v)$. For every edge in $H$ connecting $v$ and $v'$, there is a square of the form $\{w_1v,w_2v, w_1v', w_2v'\}$, which contributes $-1$ to the coefficient of $[r]$ --- $r$ is in exactly one of $\link_{\Gamma}(w_1v)$ or $\link_{\Gamma}(w_2v)$. Thus, the coefficient of $[r]$ in the right-hand side of equation \eqref{stupidthing} is $-\text{adj}(v,v')$, as we desired.

\underline{Case 3c: $w_1v \in r$ for some $w_1 \in V(G)$.} Each square in $\Gamma$ containing $r$ has one of the two following forms:

\begin{center}
\includegraphics{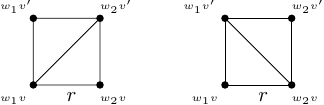}
\end{center}

In equation \eqref{stupidthing} $v$ is fixed, so $[r]$ can appear only in $P(w_1,v)$ and $P(w_2,v)$. Furthermore, $[r]$ appears only in sum $C$. Each square $S$ containing $r$ contributes $1$ to exactly one of $\alpha(r,w_1v)$ or $\alpha(r,w_2v)$ --- the diagonal edge of $S$ contains exactly one of $(w_1,v)$ and $(w_2,v)$. Thus, the coefficient of $[r]$ in Equation \eqref{stupidthing} is equal to the number of squares in $\Gamma$ containing $r$. 

A square in $\Gamma$ always has the form $e\times f$, where $e \in E(G)$, $f \in E(H)$. Thus, every square in $\Gamma$ containing $r$ has the form $(w_1,w_2) \times f$, where $f \in E(H)$ is some edge containing $v$. The number of such edges $f$ is equal to $\deg(v)$, so the number of squares containing $r$ is equal to $\deg(v)$. This means that the coefficient of $[r]$ in equation \eqref{stupidthing} is $\deg(v)$, as desired.

Finally, it is clear that if we switch the roles of $G$ and $H$, the same argument gives us that $\beta(L_w,0)$ is a principal divisor on $\Gamma$.\end{proof}

\begin{proposition}If $C$ is a divisor on $G$ and $D$ is a divisor on $H$, then $L:=\beta(C,D)$ is a Cartier divisor on $\Gamma$.\end{proposition}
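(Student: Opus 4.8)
The plan is to exploit the linearity of $\beta$ together with the fact that $\Cart(\Delta)$ is a group. Since $\beta(C,D) = \beta(C,0) + \beta(0,D)$ and a sum of Cartier divisors is Cartier, it suffices to prove that $\beta(C,0)$ and $\beta(0,D)$ are each Cartier; moreover, interchanging the roles of $G$ and $H$ (the triangulation and $\alpha$ are symmetric under this swap, which exchanges vertical and horizontal edges) turns the second statement into the first. So I would focus on $\beta(C,0) = \sum_{v \in V(G),\, r \in E(H)} C(v)[(v,r)]$, a divisor supported entirely on vertical edges.

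The technical engine of the argument is a \emph{pullback lemma}: given any PL function $g$ on $G$, define a function $\phi$ on $\Delta$ by $\phi(p,q) = g(p)$, i.e. precompose $g$ with the projection onto the $G$-coordinate. First I would check that $\phi$ is a legitimate PL function on $\Delta$: it is affine on every square $e \times f$ (being affine in the $G$-coordinate and constant in the $H$-coordinate), hence affine on each triangle, with integer slopes because $g$ has them. The claim is then that $\Div(\phi) = \beta(\Div_G(g), 0)$; equivalently, $\Div(\phi)$ equals $\Div_G(g)(p)$ on every vertical edge lying over $p \in V(G)$ and vanishes on every horizontal and every diagonal edge. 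I would verify this edge-type by edge-type using the divisor formula \eqref{divisorequation} and the explicit values of $\alpha$. For a vertical edge over $p$ both endpoints share the value $g(p)$, the opposite vertices of the incident triangles range over the $G$-neighbors of $p$ with the correct multiplicities, and the $\alpha$-weights sum to $\deg_G(p)$, so the value collapses to $\sum_{p' \sim p}\mathrm{adj}(p,p')\bigl(g(p') - g(p)\bigr) = \Div_G(g)(p)$; for a diagonal edge the two opposite vertices contribute $g(a) + g(a')$ while the two unit $\alpha$-weights subtract exactly the same amount.

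The delicate case, and the step I expect to be the main obstacle, is the horizontal edge $r = (e,q)$ with $e = aa'$. Here the opposite vertex of the unique triangle of each incident square $e \times f$ has $G$-coordinate $a$ or $a'$ depending on which diagonal the triangulation assigned to that square, so the sum $\sum_\sigma \phi(\sigma \setminus r)$ is triangulation-dependent. The point is that the diagonal choice making a square contribute $g(a')$ is precisely the one counted by $\alpha(r,(a',q))$, and the choice contributing $g(a)$ is the one counted by $\alpha(r,(a,q))$; matching these two bookkeepings shows the positive and negative terms cancel, so $\Div(\phi)(r) = 0$. With the lemma in hand, the proposition follows by locality: fix a vertex $(a,q)$ of $\Delta$. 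Because every divisor on a graph is Cartier, $C$ is locally principal at $a$, so there is a PL function $g$ on $G$ with $\Div_G(g)(a) = C(a)$. Its pullback $\phi$ then satisfies $\Div(\phi) = C(a)$ on every vertical edge at $(a,q)$ and $\Div(\phi) = 0$ on the horizontal and diagonal edges at $(a,q)$, which is exactly the restriction of $\beta(C,0)$ to the star of $(a,q)$. Hence $\beta(C,0)$ is locally principal at every vertex, i.e. Cartier, and the same holds for $\beta(0,D)$ by symmetry, completing the proof.
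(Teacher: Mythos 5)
Your proof is correct, but it takes a more self-contained route than the paper's. The paper's proof of this proposition is very short because it leans on the proposition proved immediately before it: fixing a vertex $(v,w)$, it chooses principal divisors $P_1$ on $G$ and $P_2$ on $H$ with $P_1(v)=C(v)$ and $P_2(w)=D(w)$ (possible since every divisor on a graph is locally principal), and then notes that $\beta(P_1,P_2)$ is principal --- by the preceding proposition that $\beta$ maps $\Prin(G)\times\Prin(H)$ into $\Prin(\Delta)$ --- and agrees with $\beta(C,D)$ on the star of $(v,w)$. You instead prove the needed input from scratch, and in greater generality: your pullback lemma, $\Div(g\circ \mathrm{pr}_G)=\beta(\Div_G(g),0)$ for an \emph{arbitrary} PL function $g$ on $G$, subsumes that preceding proposition, which the paper establishes only for the basis functions $\phi_v$ (its identity $\beta(0,L_v)=\sum_{w\in V(G)}\Div(\phi_{wv})$ is exactly your lemma for $g=\phi_v$, since $\sum_{w}\phi_{wv}$ is the pullback of $\phi_v$) and then extends by linearity. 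Your edge-type verification --- vertical edges reproduce $\Div_G(g)$ via the weak-tropical-complex condition $\sum_{v\in r}\alpha(r,v)=\deg(r)$, diagonal edges cancel against the two unit $\alpha$-weights, and horizontal edges cancel square-by-square according to which diagonal the triangulation chose --- parallels Cases 1--3 of the paper's earlier proof almost exactly, and your closing locality step matches the paper's, except that you split $\beta(C,D)=\beta(C,0)+\beta(0,D)$ and appeal to the $G\leftrightarrow H$ symmetry where the paper treats both factors at once via $\beta(P_1,P_2)$. What your packaging buys is a cleaner conceptual statement (principal divisors pull back to principal divisors along the projection, compatibly with $\beta$) that would let the paper's two propositions be proved in one stroke; what the paper's buys is brevity, since the hard computation is already on record. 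Note that both arguments rest equally on the paper's claim that every divisor on a graph is Cartier, so you have introduced no new dependency there.
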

\begin{proof} Fix a vertex $(v,w)$ of $\Gamma$. We wish to show that there is some principal divisor $P$ on $\Gamma$ that agrees with $L$ on all of the edges of $\Gamma$ containing $(v,w)$. By the definition of $\beta$, we know that for all $e \in E(G)$ $L(e,w) := D(w)$ and that for all $f \in E(H)$, $L(v,f) := C(v)$. We also know that $L \equiv 0$ on all of the diagonal edges containing $(v,w)$.

Now, since every divisor on a graph is Cartier, there exist principal divisors $P_1$ on $G$ and $P_2$ on $H$ such that $P_1(v) = C(v)$ and $P_2(w) = D(w)$. Clearly, $P:=\beta(P_1,P_2)$ is a divisor on $\Gamma$ that agrees with $L$ on all of the edges of $\Gamma$ containing $(v,w)$. We have already showed that $\beta: \Prin(G) \times \Prin(H) \to \Prin(\Gamma)$, so we are done.\end{proof}

Since every divisor on a graph is a Cartier divisor, we see that $\beta: \Cart(G) \times \Cart(H) \to \Cart(\Gamma)$. 

\begin{proposition}\label{eCart} The image of $\beta$ is $\{P \in \Cart(\Gamma) \st P(e) = 0 \Forall e \in \Diag(\Gamma)\}$.\end{proposition}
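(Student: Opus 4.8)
The plan is to prove the two inclusions separately; the reverse inclusion is where the real work lies. The forward inclusion is essentially immediate. The preceding proposition shows that $\beta(C,D)$ is always a Cartier divisor, and the defining formula for $\beta(C,D)$ contains only horizontal- and vertical-edge terms, so $\beta(C,D)(e) = 0$ for every $e \in \Diag(\Delta)$. Hence $\textnormal{im}(\beta)$ is contained in the right-hand set.

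For the reverse inclusion I would start with a Cartier divisor $P$ that vanishes on all diagonal edges and manufacture divisors $C$ on $G$ and $D$ on $H$ with $\beta(C,D) = P$. The key point is that $P$, being Cartier, is in particular $\Q$-Cartier, so Theorem \ref{AltCart} applies and $P$ satisfies all the balancing conditions. Fix a vertex $(a,b)$ and examine $\Xi^P_{ab}(x)$ for $x \in N_G(a)$. Each edge $(ab,xc)$ appearing in that sum is either the horizontal edge $(ab,xb)$ (when $c=b$) or a diagonal edge (when $c \neq b$); since $P$ vanishes on diagonals, $\Xi^P_{ab}(x)$ collapses to the single value $P(ab,xb)$. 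The balancing condition then says precisely that $P(ab,xb)$ is independent of $x \in N_G(a)$, i.e. all horizontal edges of the form $(ax,b)$ incident to $(a,b)$ carry the same value of $P$. The identical argument applied to $\Upsilon^P_{ab}(y)$ shows that all vertical edges of the form $(a,by)$ incident to $(a,b)$ carry a common value.

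Next I would propagate these local equalities to global ones using connectivity. Fix $b \in V(H)$ and consider the horizontal edges $(e,b)$ with $e \in E(G)$. The conditions just derived say that whenever $e,e' \in E(G)$ share an endpoint, $P(e,b) = P(e',b)$. Since $G$ is connected, any two of its edges are joined by a chain of edges in which consecutive members share a vertex (equivalently, the line graph of $G$ is connected), so $P(e,b)$ is in fact constant over all $e \in E(G)$; call this common value $D(b)$, giving a divisor $D = \sum_b D(b)[b]$ on $H$. Symmetrically, for each $a \in V(G)$ the value $P(a,r)$ is constant over $r \in E(H)$; call it $C(a)$, giving a divisor $C$ on $G$. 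Comparing values edge by edge then finishes the argument: $P(a,r) = C(a) = \beta(C,D)(a,r)$ on vertical edges, $P(e,b) = D(b) = \beta(C,D)(e,b)$ on horizontal edges, and $P = 0 = \beta(C,D)$ on diagonal edges, so $\beta(C,D) = P$.

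I expect the main obstacle to be the reduction of the balancing sums $\Xi^P_{ab}$ and $\Upsilon^P_{ab}$ to single horizontal/vertical edge values; this is exactly where the hypothesis $P(e)=0$ for $e \in \Diag(\Delta)$ is used, and once the diagonal contributions are killed the remainder is a clean line-graph propagation. One should keep an eye on the degenerate cases where $G$ or $H$ has few edges, but since both are assumed connected these do not disrupt the connectivity step.
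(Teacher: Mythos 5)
Your proof is correct and follows essentially the same route as the paper: the forward inclusion from the preceding proposition plus the definition of $\beta$, and the reverse inclusion by specializing the balancing conditions of Theorem \ref{AltCart} at each vertex (the diagonal terms vanish, so $\Xi^P_{ab}$ and $\Upsilon^P_{ab}$ collapse to single horizontal/vertical values) and then propagating by connectivity of $G$ and $H$. The only cosmetic difference is that you phrase the propagation step via connectivity of the line graph, where the paper phrases it as $P$ being constant on each star $\Star_G(a)\times\{b\}$ and hence on all of $G\times\{b\}$; these are the same argument.
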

\begin{proof} The $\subseteq$ direction is the result of the previous proposition. For the $\supseteq$ direction, let $P$ be a Cartier divisor on $\Gamma$ that assigns the value $0$ to every diagonal edge and let $(a,b)$ be a vertex in $\Gamma$. Equation \eqref{balancing} says $\Xi_{ab}^{P}(x) = D(xb,ab)$ is independent of the choice of $x \in N_G(a)$. This tells us that $P$ is constant on $\Star_G(a) \times \{b\}$. If $a' \in V(G)$, the same argument says that $P$ is constant on $\Star_G(a')\times \{b\}$. Since $G$ is connected, this implies that $P$ is constant on $G \times \{b\}$. We call this quantity $\text{Horiz}(b)$.

A similar argument using $\Upsilon_{ab}^{P}$ shows that $P$ is constant on $\{a\} \times H$. We call this quantity $\text{Vert}(a)$. We write $$C = \sum_{a \in V(G)}\text{Vert}(a)[a]$$ and $$D = \sum_{b \in V(H)}\text{Horiz}(b)[b],$$ and it is then clear that $\beta(C,D) = P$.\end{proof}

We now come to the main theorem.

\newtheorem*{MainTheorem}{Theorem \ref{MainTheorem}}
\begin{MainTheorem} The map $\gamma: \Pic(G) \times \Pic(H) \to \Pic(\Gamma)$ induced by $\beta$ is injective, and is surjective if either of $G$ or $H$ is a tree.\end{MainTheorem}

This was conjectured by Cartwright at the AIM workshop ``Generalizations of chip-firing and the critical group'' \cite{Cartwright_Email}, and was motivated by analogous results in algebraic geometry --- if $X_1$ and $X_2$ are varieties, $\Pic(X_1) \times \Pic(X_2) \hookrightarrow \Pic(X_1 \times X_2)$, and $\Pic(X_1 \times P^1) = \Pic(X_1)$ (see, for instance, \cite[Theorem 1.7]{Ischebeck}).

\begin{proposition}The map $\gamma$ is always injective.\end{proposition}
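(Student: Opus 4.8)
The plan is to show that the kernel of $\gamma$ is trivial by proving that $\beta(C,D)$ is principal on $\Delta$ only when $C$ and $D$ are themselves principal. Since $\gamma([C],[D]) = [\beta(C,D)]$, suppose $\gamma([C],[D]) = 0$, so that $\beta(C,D) = \Div(\psi)$ for some PL function $\psi$ on $\Delta$; recall that $\psi$ is determined by its values $\psi(a,b)$ at the vertices $(a,b) \in V(G) \times V(H)$. The goal is to produce PL functions $f$ on $G$ and $g$ on $H$ with $C = \Div(f)$ and $D = \Div(g)$.

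First I would extract the constraint hidden in the diagonal edges. By the definition of $\beta$, the divisor $\beta(C,D)$ assigns $0$ to every diagonal edge, so $\Div(\psi)(r) = 0$ for each $r \in \Diag(\Delta)$. A diagonal edge lies in exactly the two triangles of its square and has $\alpha(r,v) = 1$ for $v \in r$, so evaluating \eqref{divisorequation} on the square with corners $(a,b), (a',b), (a,b'), (a',b')$ (where $a \sim a'$ in $G$ and $b \sim b'$ in $H$) gives, independently of which diagonal was inserted,
$$\psi(a,b) + \psi(a',b') = \psi(a',b) + \psi(a,b').$$
Thus the discrete mixed difference of $\psi$ vanishes on every square of $\Delta$.

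Next I would use this to separate $\psi$ additively as $\psi(a,b) = f(a) + g(b)$. For a fixed edge $a \sim a'$ of $G$, the vanishing mixed difference shows that $\psi(a,b) - \psi(a',b)$ is unchanged as $b$ crosses an edge of $H$; since $H$ is connected this difference is independent of $b$. Fixing base vertices $a_0$ and $b_0$ and setting $f(a) = \psi(a,b_0)$ and $g(b) = \psi(a_0,b) - \psi(a_0,b_0)$, a short argument using connectivity of $G$ shows $\psi(a,b) - f(a) - g(b) \equiv 0$. I would also record that $f$ and $g$ inherit integer slopes from $\psi$ (the slope of $f$ across an edge of $G$ equals the slope of $\psi$ across the corresponding horizontal edge of $\Delta$, and similarly for $g$), so that they are genuine PL functions and $\psi = f \circ \pi_G + g \circ \pi_H$, where $\pi_G$ and $\pi_H$ denote the projections onto the two factors.

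Finally I would translate the separation back through $\beta$. The proof that $\beta$ carries principal divisors to principal divisors already shows $\beta(0, L_v) = \Div(\phi_v \circ \pi_H)$, the pullback of the indicator PL function at $v \in V(H)$ (this is equations \eqref{importantthing} and \eqref{stupidthing}); by linearity of $\beta$ and of $\Div$ this extends to $\beta(0,\Div(g)) = \Div(g \circ \pi_H)$, and symmetrically $\beta(\Div(f), 0) = \Div(f \circ \pi_G)$. Adding these and using the separation of $\psi$,
$$\beta(\Div(f), \Div(g)) = \Div(f \circ \pi_G + g \circ \pi_H) = \Div(\psi) = \beta(C,D),$$
so injectivity of $\beta$ forces $C = \Div(f)$ and $D = \Div(g)$; hence $([C],[D]) = 0$ and $\gamma$ is injective. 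I expect the main obstacle to be the separation step --- rigorously upgrading the per-square mixed-difference identities to a global additive splitting of $\psi$, with the integer-slope bookkeeping needed to keep $f$ and $g$ honest PL functions --- whereas the concluding manipulation with $\beta$ is routine linearity together with the already-proven propositions.
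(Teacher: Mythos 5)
Your proof is correct, but it takes a genuinely different route from the paper. The paper's argument is homological and dimension-theoretic: it introduces the subgroup $\mathrm{ePrin}(\Delta)$ of principal divisors vanishing on diagonal edges, proves $\beta(\Prin(G)\times\Prin(H)) = \mathrm{ePrin}(\Delta)$ by a rank count (using spanning trees of $G$ and $H$ to bound the codimension of the space $W$ of PL functions whose divisors vanish on diagonals) together with a separate torsion-freeness argument, and then deduces $\ker(\gamma)=0$ from the Snake Lemma applied to the two short exact sequences $0\to\Prin\to\Cart\to\Pic\to 0$. You instead prove directly that $\beta^{-1}(\Prin(\Delta)) = \Prin(G)\times\Prin(H)$: from the vanishing of $\Div(\psi)$ on diagonals you get the mixed-difference identity $\psi(a,b)+\psi(a',b')=\psi(a',b)+\psi(a,b')$ on every square (the same identity the paper extracts, but uses only to bound $\dim W$), and you upgrade it, via connectivity of $G$ and $H$, to an exact additive splitting $\psi = f\circ\pi_G + g\circ\pi_H$; pushing this back through $\beta$ and invoking its injectivity forces $C=\Div(f)$ and $D=\Div(g)$. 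Your route is more elementary and constructive --- it dispenses with the Snake Lemma, the spanning-tree rank count, and the torsion argument, and it actually identifies $W$ exactly rather than merely bounding its dimension (which also recovers the paper's equality $\im(a)=\mathrm{ePrin}(\Delta)$ as a corollary, given the earlier proposition that $\beta$ preserves principality). The paper's framing, in exchange, packages the obstruction to surjectivity into $\mathrm{coker}(\beta)$ via the exact sequence $0\to\ker(\gamma)\to\mathrm{coker}(a)\to\mathrm{coker}(\beta)\to\mathrm{coker}(\gamma)$, which is a natural setting for studying the cokernel appearing in the paper's final conjecture. One small point of bookkeeping you should make explicit: PL functions are only required to have integer \emph{slopes}, not integer values, so before writing $\Div(g)=\sum_v g(v)L_v$ and applying $\mathbb{Z}$-linearity of $\beta$ you should normalize (e.g., replace $f$ by $f-\psi(a_0,b_0)$), after which $f$ and $g$ are integer-valued and the constant shift changes no divisor; you flagged this issue yourself, and it is harmless.
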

\begin{proof}
We let $\text{ePrin}(\Gamma)$ be the abelian subgroup of $\text{Prin}(\Gamma)$ consisting of principal divisors that are $0$ on all diagonal edges. We claim that $\text{ePrin}(\Gamma)$ is the image of $\text{Prin}(G)\times \text{Prin}(H)$ under $\beta$. We know that $\beta$ is injective, so $\text{rank}(\beta(\text{Prin}(G)\times\text{Prin}(H)))=|V(G)| + |V(H)| - 2$.

On the other hand, if $C$ is a principal divisor, we can write $C = \Div(\phi)$ for some PL function $\phi$. Consider a square $S=\{a,b,c,d\}$ as pictured below, with $\diag(S) = r$. By equation \eqref{divisorequation}, $C(r) = \phi(a) +\phi(c) - \phi(b) - \phi(d)$.

\begin{center}
\includegraphics{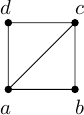}
\end{center}

If $C \in \text{ePrin}(\Gamma)$, we have $\phi(a) + \phi(c) = \phi(b) + \phi(d)$. Recall that every PL function $\phi$ can be viewed as a vector in $\Z^{V(\Gamma)}$. Let $W$ be the vector space of PL functions on $\Gamma$ that map to divisors in $\text{ePrin}(\Gamma)$ under $\Div$. Then, if $\phi \in W$, we see that every square in $\Gamma$ gives rise to a linear constraint on the entries in the vector $\phi$. We observe that $\ker(\Div)\subseteq W$.

We claim that the codimension of $W$ is at least $(|V(G)|-1)(|V(H)|-1)$. To see this, let $T_1$ be a spanning tree in $G$, and $T_2$ be a spanning tree of $H$. Then $T_1\times T_2$ gives rise to a subcomplex of $\Gamma$ with $(|V(G)|-1)(|V(H)|-1)$ squares (since $T_1$ has $|V(G)|-1$ edges and $T_2$ has $|V(H)|-1$ edges). Fix a root $v_1$ of $T_1$, and take some linear extension of the partial order on the vertices of $T_1$ arising from this choice of root. Fix a similar linear order on the vertices of $T_2$. Now, these linear orders give rise to a lexicographic ordering on the squares in $T_1 \times T_2$, with the property that the $i$th square $S_i$ includes a vertex that is not in $S_1\cup\cdots\cup S_{i-1}$. This means that the relation arising from each square is linearly independent of all prior relations. Thus, $\text{codim}(W) \geq (|V(G)|-1)(|V(H)|-1)$.

We know that $\dim(\text{ePrin}(\Gamma)) = \dim(W) - \dim(\ker(\Div))$. We also know that $\text{codim}(W) = |V(\Gamma)| - \dim(W)$, so $\dim(W) = |V(\Gamma)| - \text{codim}(W)$. Thus, \begin{align*}\dim(\text{ePrin}(\Gamma)) &= |V(\Gamma)| - \text{codim}(W) - \dim(\ker(\Div))\\ &\leq |V(G)||V(H)| - (|V(G)|-1)(|V(H)-1|) - \dim(\ker(\Div))\end{align*}

Furthermore, we know that the all $1$'s vector is in $\ker(\Div)$, so $\dim(\ker(\Div)) \geq 1$. Thus, we can write \begin{align*}\dim(\text{ePrin}(\Gamma)) &\leq |V(G)||V(H)| - (|V(G)| -1)(|V(H)| -1) -1\\ &\leq |V(G)| + |V(H)| -2.\end{align*} If $a: \Prin(G) \times \Prin(H) \to \text{Prin}(\Gamma)$ is the restriction of $\beta$ to $\Prin(G) \times \Prin(H)$, we see that $\text{rank}(\text{ePrin}(\Gamma))= \text{rank}(\im(a))$ (as free abelian groups).

We claim that $\im(a)$ is actually equal to $\text{ePrin}(\Gamma)$, i.e., that $\text{coker}(a)$ is torsion-free. Let $D \in \text{ePrin}(\Gamma)$ with $kD \in \im(a)$ for some $k \in \N$. Then $kD = a(B,C)$, where $B,C$ are in $\Prin(G)$ and $\Prin(H)$, respectively. However, by the definition of $a$, we know that the value of $kD$ on every (nondiagonal) edge of $\Gamma$ is either the value of $B$ on some vertex of $G$ or from the value of $C$ on some vertex of $H$. Thus, $\frac{B}{k}$ and $\frac{C}{k}$ are integer-valued, and since $B$ and $C$ are in $\Prin(G)$ and $\Prin(H)$, respectively, so are $\frac{B}{k}$ and $\frac{C}{k}$. Thus, $a(\frac{B}{k},\frac{C}{k}) = D$, so $a(\Prin(G) \times \Prin(H)) = \text{ePrin}(\Gamma)$.

Let $\gamma: \text{Pic}(G) \times \text{Pic}(H) \to \text{Pic}(\Gamma)$ be the map induced by $\beta$. We apply the Snake Lemma to the commutative diagram

$$\xymatrix{0 \ar[r] & \Prin(G) \times \Prin(H) \ar[r] \ar[d]^{a} & \Cart(G)\times \Cart(H) \ar[r] \ar[d]^{\beta} & \Pic(G) \times \Pic(H) \ar[d]^{\gamma} \ar[r] & 0\\ 0 \ar[r]& \Prin(\Gamma) \ar[r]^{i} & \Cart(\Gamma) \ar[r] & \Pic(\Gamma) \ar[r] & 0}$$ and we obtain an exact sequence

$$\xymatrix{0 \ar[r] & \ker(\gamma) \ar[r] & \text{coker}(a) \ar[r]^{i_*} & \text{coker}(\beta) \ar[r] & \text{coker}(\gamma)},$$

where $i_*$ is the map induced by the inclusion  $\Prin(\Gamma) \hookrightarrow \Cart(\Gamma)$. We will show that $i_*$ is injective, which will imply by exactness that $\ker(\gamma) = 0$. 

Let $[A], [B] \in \text{coker}(a)$, and suppose that $i_{*}([A]) = i_{*}([B])$. Then, $i(A) - i(B) \in \im(\beta)$, so $A-B$ is a Cartier divisor that is $0$ on all diagonal edges of $\Gamma$ by Proposition \ref{eCart}. In fact, $A-B$ is a principal divisor, so $A-B \in \text{ePrin}(\Gamma)=\im(a)$.\end{proof}

\begin{proposition}\label{SurjProof}The map $\gamma: \Pic(G) \times \Pic(H) \to \Pic(\Gamma)$ is surjective if at least one of $G$ or $H$ is a tree.\end{proposition}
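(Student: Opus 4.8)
By Proposition \ref{eCart} and the inclusion $\Prin(\Delta) \subseteq \Cart(\Delta)$, it suffices to show that every Cartier divisor $D$ on $\Delta$ agrees, on the set $\Diag(\Delta)$ of all diagonal edges, with $\Div(\phi)$ for some PL function $\phi$. Indeed, $D - \Div(\phi)$ would then be a Cartier divisor vanishing on every diagonal edge, hence equal to $\beta(C_1, C_2)$ for some $(C_1, C_2)$ by Proposition \ref{eCart}; since $\Div(\phi) \in \Prin(\Delta)$ this gives $[D] = \gamma([C_1], [C_2])$. So the plan is to solve, for a given Cartier $D$, the linear system in the unknown vertex-values of $\phi$ consisting of one equation per square $S = e \times f$ (with $e \in E(G)$, $f = \{b,b'\} \in E(H)$). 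Writing $g_c(e) = \phi(u',c) - \phi(u,c)$ for the difference of $\phi$ across an oriented edge $e = (u,u')$ of $G$ at $H$-level $c$, equation \eqref{divisorequation} shows that $\Div(\phi)(\diag S)$ is the mixed second difference of $\phi$ around the four corners of $S$, so the equation for $S$ reads $g_{b'}(e) - g_b(e) = \pm D(\diag S)$.

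Assume without loss of generality that $H$ is a tree, and root it at a vertex $\rho$. I would build $\phi$ level by level: set $\phi(\cdot,\rho) = 0$, and for each non-root vertex $b$ with parent $p$ and tree-edge $f_b = \{p,b\}$, the equations for the squares $e \times f_b$ prescribe $g_b(e)$ for every $e \in E(G)$ in terms of the already-defined $g_p(e)$. Thus $\phi(\cdot,b)$ is determined up to an additive constant by its prescribed differences across the edges of $G$, and since $H$ is a tree every square has the form $e \times f_b$ for a unique $b$, so this process, if it can be carried out, produces a $\phi$ matching $D$ on all of $\Diag(\Delta)$. A function on $V(G)$ with prescribed integer edge-differences exists if and only if those differences sum to zero around every cycle of $G$. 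As $g_p$ already has this property by induction, the step at $b$ succeeds precisely when, for every cycle $\vec Z$ in $G$,
\begin{equation*}Z_{\vec Z}(f_b) := \sum_{e \in \vec Z} \pm\, D(\diag(e \times f_b)) = 0,\end{equation*}
the signs being dictated by the orientation of $e$ in $\vec Z$ and the diagonal direction in $e \times f_b$. I expect this cycle-consistency identity to be the crux.

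To prove $Z_{\vec Z}(f) = 0$ for every edge $f$ of $H$, I would fix a cycle $\vec Z = (a_0, \dots, a_L = a_0)$ in $G$ and sum the balancing conditions \eqref{balancing} of Theorem \ref{AltCart} around it at a fixed level $b$: writing each condition at $a_k$ as $\Xi^D_{a_k b}(a_{k-1}) - \Xi^D_{a_k b}(a_{k+1}) = 0$ and adding over $k$, the horizontal-edge contributions telescope and cancel, since the horizontal term on a cycle-edge $e_k$ occurs once with each sign. Grouping the surviving diagonal contributions by the $H$-neighbor $c$ of $b$ toward which each diagonal points, and noting that each square $e_k \times \{b,c\}$ contributes exactly its diagonal value with a sign set by its diagonal direction, the resulting identity is
\begin{equation*}\sum_{c \in N_H(b)} \pm\, Z_{\vec Z}(\{b,c\}) = 0.\end{equation*}
This yields, for each vertex $b$ of $H$, one linear relation among $\{Z_{\vec Z}(f) : f \ni b\}$. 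Because $H$ is a tree, I can then peel leaves: at a leaf $b$ with unique incident edge $f$ the relation reads $\pm Z_{\vec Z}(f) = 0$, forcing $Z_{\vec Z}(f) = 0$; deleting $f$ removes that term from the relation at its other endpoint, and the remaining relations live on a smaller forest, so induction annihilates every $Z_{\vec Z}(f)$. It is exactly here that the tree hypothesis is essential, consistent with the fact that $\gamma$ need not be surjective otherwise.

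With cycle-consistency in hand, the level-by-level construction goes through and produces an integer-valued PL function $\phi$ with $\Div(\phi)$ and $D$ agreeing on every diagonal edge, whence $[D] \in \im(\gamma)$ as explained above. The main obstacle is the cycle-consistency identity: isolating, from the balancing conditions (which entangle horizontal edges and all diagonals at a vertex at once), the vanishing of the diagonal cycle-sum for a single $H$-edge $f$. The telescoping that cancels the horizontal edges and the leaf-peeling on the tree $H$ are the two ideas that make this isolation possible.
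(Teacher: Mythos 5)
Your proof is correct, and although it shares the paper's skeleton---reduce via Proposition \ref{eCart} to making $D$ agree with a principal divisor on $\Diag(\Delta)$, extract cycle relations by summing balancing conditions of Theorem \ref{AltCart} around a cycle of $G$ so that horizontal terms telescope, and exploit the tree structure of $H$ by peeling leaves---the implementation is genuinely different in two ways. First, the paper never builds a PL function directly: it works one slab $G\times\ell$ at a time, identifies the diagonal restrictions of the principal divisors $\Div(\phi_{vw})$ with the coboundary matrix $M$ of $G$, places the diagonal part of $D$ in the real column span of $M$ using the cycle relations, and then needs total unimodularity of $M$ to land in the integer span; your level-by-level integration of prescribed integer differences up the rooted tree makes integrality automatic and dispenses with the matrix machinery. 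Second, and more substantively, your choice to prove $Z_{\vec{Z}}(f)=0$ for \emph{all} edges $f$ of $H$ up front, via the per-vertex relations $\sum_{f\ni b}\pm Z_{\vec{Z}}(f)=0$ and leaf-peeling on $H$ itself, confronts the fact that the balancing condition at a vertex $(v,b)$ entangles the diagonals of every slab incident to $b$. The paper's induction on subtrees $H'\subseteq H$ passes over exactly this point: when it invokes its $H=K_2$ argument for the slab $G\times\ell$ at a leaf $w$ of $H'$, the balancing conditions at level $w$ in the ambient complex also contain diagonal terms over edges of $H$ at $w$ lying outside $H'$ (a leaf of $H'$ need not be a leaf of $H$), so the cycle relations for that slab alone do not follow as directly as claimed there. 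Your up-front lemma supplies precisely the missing statement (and since principal divisors have vanishing cycle sums, the quantities $Z_{\vec{Z}}(f)$ are invariants of the linear equivalence class, so the lemma remains available throughout any subsequent correction process). In short, your route is valid, somewhat more elementary, and tighter at the one delicate point of the paper's argument.
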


\begin{proof}
First, we consider the case $H = K_2$, the complete graph on two vertices. Again, $\Gamma$ is a triangulation of $G\times K_2$. We let $w_1$ and $w_2$ be the vertices of $K_2$. Now, the diagonal edges of $\Gamma$ are in one-to-one correspondence with the edges of $G$: every square in $G \times K_2$ is of the form $r \times w_1w_2$, for $r \in E(G)$, and each square contains exactly one diagonal edge. We define an orientation on $G$ as follows. For each square, orient each edge of $G \times\{w_2\}$ so that it points toward the vertex containing the diagonal edge:

\begin{center}
\includegraphics{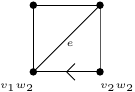}
\end{center}

Note, if $e$ is the diagonal edge of a square $S$ in $G\times K_2$, we let $\tilde{e}$ be the edge in $S \cap (G\times \{w_2\})$.

We now consider the coboundary matrix $M$ of $G$. This is a matrix whose rows are indexed by edges of $G$, and whose columns are indexed by vertices of $G$, with $$M(e,v) = \begin{cases}1, & v\in e, \text{ arrow pointing toward } v\\ -1, & v \in e, \text{ arrow pointing away from } v\\ 0, & v\notin e\end{cases}.$$ Let $P(\Gamma)$ be the matrix whose columns span the principal divisors of $\Gamma$. We let $M'$ be the submatrix of $P(\Gamma)$ whose rows correspond to the diagonal edges of $\Gamma$, and whose columns correspond to the vertices of $G \times \{w_2\}$. 

We claim that $M = M'$. First, we observe that $M'$ and $M$ have the same size --- the columns of $M'$ are labeled by the vertices in $G \times \{w_2\}$, and the rows of $M'$ are labeled by the diagonal edges of $\Gamma$ (which we know to be in bijection with the edges of $G$). Furthermore, suppose that $(v,w_2)$ is a vertex in $G\ \times \{w_2\}$, and that $vw_2$ has a diagonal edge $d$ in its link. Then, $M'(d,vw_2) = -\alpha(d,vw_2) = -1$. When we look at the corresponding square, we see that the edge $e$ in $G$ corresponding to $d$ is oriented away from $v$, so $M(e,v) = -1$. Similarly, suppose that $vw_2$ is contained in a diagonal edge $d$ in $\Gamma$. Then $M'(d,vw_2) = -\alpha(d,vw_2) = 1$, and when we look at the corresponding square, we see that the edge $e$ in $G$ is oriented toward $v$, so $M(e,v) = 1$. By the same argument, when $d$ is the diagonal edge in $\Gamma$ corresponding to an edge $e$ in $G$, $M(e,v) = 0 \iff M'(d,vw_2) = 0$.

By \cite[p. 31]{Combinatorial_Matrix_Theory}, we know that $M$ is totally unimodular, so the cokernel of $M$ is torsion-free. We also know from \cite[p. 168]{Godsil_Royle} that the rank of $M$ is $|V(G)| -1$. We observe that $M$ is is the transpose of the oriented incidence matrix of $G$, and so by \cite[Corollary 14.2.3]{Godsil_Royle}, the following relations span the left kernel of $M$. Let $\{v_0,\dots,v_{k-1}\}$ be a cycle in $G$, and let $M_{v_i,v_j}$ be the row of $M$ corresponding to the oriented edge $(v_i,v_j)$. Then, $\displaystyle\sum_{i=0}^{k-1}\mathcal{O}(v_i,v_{i+1})M_{v_i,v_j} = 0$ (with the index $i$ considered modulo $k$), where $$\mathcal{O}(v_i,v_{i+1}) = \begin{cases}1, & (v_i,v_{i+1}) \text{ is oriented toward } v_{i+1}\\ -1, & (v_i,v_{i+1}) \text{ is oriented toward } v_i \end{cases}.$$ 

Let $D$ be a Cartier divisor on $\Gamma$. Then, for every vertex $v_iw_2$ in $G\times \{w_2\}$, the horizontal balancing condition at $v_iw_2$ is: \begin{equation}\label{SurjBalance}D(v_iw_2,v_{i-1}w_1) + D(v_{i-1}w_2,v_iw_2) = D(v_iw_2,v_{i+1}w_1) + D(v_{i+1}w_2,v_iw_2),\end{equation} where we adopt the convention that $D(e) = 0$ if $e \notin E(\Gamma)$.

Therefore, the Cartier divisor $D$ satisfies $$\sum_{i=0}^{k-1}\left(D(v_iw_2,v_{i-1}w_1) + D(v_{i-1}w_2,v_iw_2)\right) = \sum_{i=0}^{k-1}\left(D(v_iw_2,v_{i+1}w_1) + D(v_{i+1}w_2,v_iw_2)\right).$$ We observe that $D(v_{i+1}w_2,v_iw_2)$ occurs exactly twice in this sum. Once on the right-hand side in the $i$th summand, and once on the left-hand side in the $(i+1)$st summand. Thus, we can cancel all such terms, and rewrite the equation to obtain: \begin{equation}\label{SurjBalanceSimplified}\sum_{i=0}^{k-1}D(v_iw_2,v_{i-1}w_1) = \sum_{i=0}^{k-1}D(v_iw_2,v_{i+1}w_1).\end{equation}

We observe that each diagonal edge $e= (v_iw_2,v_{i-1}w_1)$ (i.e., those on the left-hand side of equation \eqref{SurjBalanceSimplified}) gives rise to an orientation of the edge $\tilde{e} = (v_{i-1}w_2,v_iw_2)$ that points towards $v_iw_2$, and a diagonal edge $e = (v_iw_2,v_{i+1}w_1)$ (i.e., one on the right-hand side of equation \eqref{SurjBalanceSimplified}) gives rise to an orientation of the edge $\tilde{e} = (v_iw_2,v_{i+1}w_2)$ that points away from $v_iw_2$. This means that the values of $D$ on the diagonal edges of $G \times K_2$ must satisfy the linear relations on the rows of $M$ (when viewed as the coboundary matrix of a graph), so the diagonal part of $D$ must be in the $\R$-span of $M$ and hence in the $\Z$-span of $M$, by the total unimodularity of $M$.

Thus, every Cartier divisor $D$ on $\Gamma$ is linearly equivalent to a divisor $D'$ that is $0$ on all diagonal edges. Since the columns of $M'$ are indexed by vertices of the form $vw_2$, $v \in V(G)$, we can write $$D' = D - \sum_{v \in V(G)}k_v\Div(\phi_{vw_2}), \qquad k_v \in \Z,$$ as desired.

Now, let us assume that $G$ is a graph and $H$ is a tree. Fix a root $r$ of $H$, and, for every vertex $w \in H$, let $H_w$ be the subtree rooted at $w$. This choice of root in $H$ induces a partial order on $H$ so that every non-root vertex of $H$ has a unique parent. For the purposes of this part of the proof, if $\Gamma'$ is a subcomplex of $G \times H$, we let $\Gamma(\Gamma') := \Gamma|_{\Gamma'}$.

We claim that for every Cartier divisor $D$ on $\Gamma$, and every subtree $H'$ of $H$ that is rooted at $r$, we can find a Cartier divisor $D'$ that is linearly equivalent to $D$ and that vanishes on $\Diag(\Gamma(G\times H'))$. We prove this by induction on $|E(H')|$. 

If $|E(H')| = 0$, the statement is trivial. If $|E(H')| = 1$, we are done by the first part of the proof. 

Now, suppose that $|E(H')| = n$, and let $w$ be a leaf in $H'$, with $\ell$ the unique edge containing it. Then, $H'':=H'\setminus\{\ell\}$ is a tree with $n-1$ edges, so by induction there is a Cartier divisor $D_{\ell}$ that is linearly equivalent to $D$ and that vanishes on $\Diag(\Gamma(G\times H''))$. By the first part of the proof, there is a Cartier divisor $$D' = D_{\ell} + \sum_{i=1}^nk_i\Div(\phi_{v_iw}),$$ that vanishes on $\Diag(\Gamma(G\times \{\ell\}))$. We note that $\Div(\phi_{v_iw})$ vanishes on $\Diag(G\times H'')$ for all $i$, so $$D_{\ell}|_{\Diag(\Gamma(G\times H''))} = D'|_{\Diag(\Gamma(G\times H''))}.$$ Thus, $D'$ is a Cartier divisor that is linearly equivalent to $D$ and that vanishes on $\Diag(\Gamma(G\times H'))$. This concludes the induction.

Our induction has shown that every Cartier divisor on $\Gamma$ is linearly equivalent to one that is zero on all diagonal edges of $\Gamma$. By Proposition \ref{eCart}, every Cartier divisor that is $0$ on all diagonal edges of $\Gamma$ is in the image of $\beta$. So, let $[D] \in \Pic(\Gamma)$ be a linear equivalence class of Cartier divisors. Then, $[D] = [D']$, for some $D'$ that vanishes on all diagonal edges of $\Gamma$. We know that $D' = \beta(A,B)$, where $A$ and $B$ are Cartier divisors on $G$ and $H$ respectively, so $\gamma([A],[B]) = [D'] = [D]$. \end{proof}

Based on numerical evidence obtained using Sage \cite{sage}, we conjecture a slightly stronger result. Let $g(G)$ be the topological genus of $G$, i.e. the number of edges of $G$ in the complement of a spanning tree. Then:

\newtheorem*{MyConjecture}{Conjecture \ref{MyConjecture}}
\begin{MyConjecture}$\Pic(\Gamma) \cong \Pic(G) \times \Pic(H) \times \Z^{g(G)g(H)}$. \end{MyConjecture}

\bibliographystyle{amsplain}
\bibliography{biblio}

\providecommand{\bysame}{\leavevmode\hbox to3em{\hrulefill}\thinspace}
\providecommand{\MR}{\relax\ifhmode\unskip\space\fi MR }
\providecommand{\MRhref}[2]{%
  \href{http://www.ams.org/mathscinet-getitem?mr=#1}{#2}
}
\providecommand{\href}[2]{#2}
\begin{thebibliography}{10}

\bibitem{Graph_Riemann_Roch}
Matthew Baker and Serguei Norine, \emph{Riemann-{R}och and {A}bel-{J}acobi
  theory on a finite graph}, Adv. Math. \textbf{215} (2007), no.~2, 766--788.
  \MR{2355607 (2008m:05167)}

\bibitem{Hyperelliptic}
\bysame, \emph{Harmonic morphisms and hyperelliptic graphs}, Int. Math. Res.
  Not. IMRN (2009), no.~15, 2914--2955. \MR{2525845}

\bibitem{Combinatorial_Matrix_Theory}
Richard~A. Brualdi and Herbert~J. Ryser, \emph{Combinatorial matrix theory},
  Encyclopedia of Mathematics and its Applications, vol.~39, Cambridge
  University Press, Cambridge, 1991. \MR{1130611 (93a:05087)}

\bibitem{Cartwright_Email}
Dustin Cartwright, Personal communication.

\bibitem{Cartwright_Paper_Old}
\bysame, \emph{Tropical complexes (v1)}, Preprint, arXiv:1308.3813v1, 2013.

\bibitem{Cartwright_Surfaces}
\bysame, \emph{Combinatorial tropical surfaces}, Preprint, arXiv:1506.02023,
  2015.

\bibitem{Cartwright_Paper}
\bysame, \emph{Tropical complexes}, Preprint, arXiv:1308.3813, 2015.

\bibitem{Godsil_Royle}
Chris Godsil and Gordon Royle, \emph{Algebraic graph theory}, Graduate Texts in
  Mathematics, vol. 207, Springer-Verlag, New York, 2001. \MR{1829620
  (2002f:05002)}

\bibitem{Hatcher_topology}
Allen Hatcher, \emph{Algebraic topology}, Cambridge University Press,
  Cambridge, 2002. \MR{1867354 (2002k:55001)}

\bibitem{Ischebeck}
Friedrich Ischebeck, \emph{Zur {P}icard-{G}ruppe eines {P}roduktes}, Math. Z.
  \textbf{139} (1974), 141--157. \MR{0401758}

\bibitem{sage}
W.\thinspace{}A. Stein et~al., \emph{{S}age {M}athematics {S}oftware ({V}ersion
  5.11)}, The Sage Development Team, 2014, {\tt http://www.sagemath.org}.

\end{thebibliography}






\end{document}